\newtheorem{theorem}{Theorem}[section]
\newtheorem{lemma}[theorem]{Lemma}
\newtheorem{corollary}[theorem]{Corollary}
\newtheorem{proposition}[theorem]{Proposition}
\newtheorem{definition}[theorem]{Definition}
\DeclareMathOperator{\argmin}{argmin}
\DeclareMathOperator{\dom}{dom}
\newcommand{\R}{\mathbb{R}}
\newcommand{\inner}[2]{\langle{#1},{#2}\rangle}
\newcommand{\cP}{{\cal{P}}}
\newcommand{\vgap}{\vspace{.1in}}
\newcommand{\bi}{\begin{itemize}}
\newcommand{\ei}{\end{itemize}}
\newcommand{\ba}{\begin{array}}
\newcommand{\ea}{\end{array}}
\begin{document}

\title{Convergence rate bounds for a proximal ADMM with over-relaxation stepsize parameter for solving \\
nonconvex linearly constrained problems}

\author{
    Max L.N. Gon\c calves
     \thanks{Instituto de Matem\'atica e Estat\'istica, Universidade Federal de Goi\'as, Campus II- Caixa
    Postal 131, CEP 74001-970, Goi\^ania-GO, Brazil. (E-mails: {\tt
       maxlng@ufg.br} and {\tt jefferson@ufg.br}).  The work of these authors was
    supported in part by  CNPq Grants 406250/2013-8, 444134/2014-0 and 309370/2014-0.}
    \and
      Jefferson G. Melo \footnotemark[1]
    \and
    Renato D.C. Monteiro
    \thanks{School of Industrial and Systems
    Engineering, Georgia Institute of
    Technology, Atlanta, GA, 30332-0205.
    (email: {\tt monteiro@isye.gatech.edu}). The work of this author
    was partially supported by NSF Grant CMMI-1300221.}
}

\date{February 3, 2017 (Revised: November 02, 2017)}

\maketitle

\begin{abstract}
This paper establishes convergence rate bounds for a variant of the proximal alternating direction method of multipliers (ADMM) for
 solving nonconvex linearly constrained optimization problems.
The variant of the proximal ADMM allows the inclusion of an over-relaxation stepsize parameter belonging to the interval $(0,2)$. 
To the best of our knowledge, all related papers in the literature only consider the case where the over-relaxation parameter lies in the interval
$(0,(1+\sqrt{5})/2)$.   
\\
\\
  2000 Mathematics Subject Classification: 
  47J22, 49M27, 90C25, 90C26, 90C30, 90C60, 
  65K10.
\\
\\   
Key words: alternating direction method of multipliers (ADMM), nonconvex program,
 pointwise iteration-complexity,  first-order methods.
 \end{abstract}

\pagestyle{plain}
\section{Introduction} \label{sec:int}
We consider  the following linearly constrained  problem 
\begin{equation} \label{optl}
\min \{ f(x) + g(y) : A x + B y =b, \; x\in \R^n, y \in \R^p \}
\end{equation}
where  $f: \R^{n} \to (-\infty,\infty]$ and $g: \R^{p} \to (-\infty,\infty]$ are proper lower semicontinuous functions,
$A\in \R^{l \times n}$,  $B \in \R^{l\times p}$ and  $b \in \R^l$. 
Optimization problems such as  \eqref{optl} appear in many important applications such as  nonnegative matrix factorization, distributed matrix factorization, distributed clustering, sparse zero variance discriminant analysis, tensor decomposition, and matrix completion, asset allocation (see, e.g., \cite{Ames2016,5712153,Liavas,Wen2013,Xu2012,Zhang2014,Zhang20101}).
Moreover, it has observed that (specific variants of) the alternating direction method of multipliers (ADMM) can tackle many of the instances
arising in these settings extremely well despite many of them being nonconvex.

A particular ADMM class for solving~\eqref{optl}, namely, the  proximal ADMM, recursively computes a sequence
$\{(s_k,y_k,x_k)\}$ as
\begin{align}
x_k &= \argmin_{x} \left \{  {\mathcal L}_\beta(x,y_{k-1},\lambda_{k-1})+\frac{1}{2}\|x- x_{k-1}\|_{G}^2 \right\},\nonumber\\
y_k &= \argmin_y \left \{ {\mathcal L}_\beta(x_k,y,\lambda_{k-1}) +\frac{1}{2}\|y- y_{k-1}\|_{H}^2\right\},\label{ADMMclass}\\
\lambda_k &= \lambda_{k-1}-\theta\beta\left[Ax_k+By_k-b\right] \nonumber
\end{align}
where $\beta>0$ is a penalty parameter, $\theta>0$ is a  stepsize parameter,
 $G \in \R^{n \times n}$ and  $H \in \R^{p \times p}$ are symmetric and positive semidefinite matrices, and 
 \[
{\mathcal L}_\beta(x,y,\lambda):=f(x)+g(y)-\langle \lambda,Ax+By-b\rangle+\frac{\beta}{2}\|Ax+By-b\|^2
\]
is  the augmented Lagrangian function for problem \eqref{optl}. If $(H,G)=(0,0)$ in the above method, we obtain the standard ADMM. 
 Moreover,   the above subproblems with suitable choices of $G$ and $H$ are easy to
solve or  even have  closed-form solutions for many relevant instances of \eqref{optl} (see   \cite{Deng1,HeLinear,Wang2012,Yang_linearizedaugmented} for more details).
 
For the case in which $f$ and $g$ in  \eqref{optl} are both convex (e.g., see \cite{MJR2,He2,HeLinear,monteiro2010iteration}),  the complexity results for the proximal ADMM~\eqref{ADMMclass} can be conveniently stated  in terms of the following simple termination criterion associated with the optimality condition for~\eqref{optl}, namely: for given $\rho, \varepsilon>0$, terminate with a quintuple
$(x,y, \lambda,r_1,r_2) \in \R^n \times \R^p \times \R^l\times \R^n\times \R^p$ satisfying
\begin{equation}\label{sttoping}
\max \{ \|Ax +By -b\|, \|r_1 \|, \|r_2\| \} \le \rho, \quad
r_1 \in \partial_{\varepsilon} f(x) - A^* \lambda, \quad r_2 \in \partial_{\varepsilon} g(y) - B^*  \lambda
\end{equation}
where $\partial_{\epsilon}$ denotes  the  classical $\epsilon$-subdifferential of convex functions and the norms in the first
inequality can be arbitrarily chosen.
In terms of this termination criterion, the best ergodic iteration-complexity bound found in the literature is
${\cal O}(\max\{\rho^{-1},\varepsilon^{-1}\})$ while the best pointwise
one is ${\cal O}(\rho^{-2})$. (The latter bound is independent of $\varepsilon$ since, in the pointwise case,
 the two inclusions above
are shown to hold with $\varepsilon=0$.)

This paper considers the special case of \eqref{optl} in which $f$ is as stated immediately following~\eqref{optl} (and hence not necessarily convex)
and $g$ is a differentiable function whose gradient is Lipschitz continuous on the whole $\R^p$.
By considering an extended notion of subdifferential for the nonconvex function $f$ (see for example \cite{Mordu2006,VariaAna}), this paper
establishes an ${\cal O}(\rho^{-2})$-pointwise iteration-complexity bound
to obtain
a quadruple 
$(x,y, \lambda,r_1) \in \R^n  \times \R^p\times \R^l\times \R^n$ satisfying
\[
\max \{ \|Ax +By -b\| , \|\nabla g(y) - B^* \lambda\|, \|r_1 \| \} \le \rho, \quad
r_1 \in \partial f(x) - A^*  \lambda.
\]
for an important subclass of the proximal  ADMM~\eqref{ADMMclass}. The latter subclass has the following properties:
the penalty parameter $\beta$ is sufficiently large (see \eqref{assump:betatau}), $G$ is an arbitrary positive semidefinite matrix,
$H$ is a sufficiently large positive multiple of the identity, and the stepsize $\theta$ lies in
the interval $(0,2)$.
To the best of our knowledge, no iteration-complexity has been established in the literature  for a variant of the ADMM with stepsize
$\theta > (\sqrt{5}+1)/2$, even for the case in which \eqref{optl} is assumed to be a  convex problem.
It is worth pointing out that  \cite{FPST_editor,glowinski1984} show that larger choice of $\theta$ usually
improves the practical performance of the proximal ADMM.
\\[2mm]
{\bf Previous related works.} 
The  ADMM was  introduced in \cite{0352.65034,0368.65053} and is thoroughly discussed in \cite{Boyd:2011,glowinski1984}.
Even though convergence of the sequence generated by the ADMM has been established in very early papers about it,
only recently has its iteration-complexity been established. To discuss this development in the convex case, we
 use the terminology
weak pointwise or strong pointwise bounds
to refer to complexity bounds relative to the best of the first $k$ iterates or the last iterate, respectively, to
satisfy the termination criterion \eqref{sttoping}.
The first iteration-complexity bound for the  ADMM  was established in  
\cite{monteiro2010iteration} under the assumptions that $C$ is injective.
More specifically, the ergodic iteration-complexity for the standard ADMM  is derived in  \cite{monteiro2010iteration}   for any $\theta \in (0,1]$ while
a weak pointwise iteration-complexity easily follows from the approach in \cite{monteiro2010iteration} for any $\theta \in (0,1)$.
Subsequently, without assuming that $C$ is injective, \cite{HeLinear} established the ergodic iteration-complexity of the proximal  ADMM~\eqref{ADMMclass}
with $G=0$ and $\theta=1$ and, as a consequence, of the  split inexact Uzawa method~\cite{Xahang}.
Paper \cite{He2} establishes the weak pointwise and ergodic iteration-complexity  
of another collection of ADMM instances which includes the standard ADMM for any $\theta \in (0,(1+\sqrt{5})/2)$.
It should be noted however that \cite{He2,HeLinear} do not provide any details on how to obtain an easily 
verifiable ergodic termination criterion with a well-established iteration-complexity bound.
A strong pointwise iteration-complexity bound for the proximal ADMM   \eqref{ADMMclass} with $G=0$ and $\theta =1$ is derived in~\cite{He2015}. 
Pointwise and ergodic iteration-complexity results for the whole proximal ADMM~\eqref{ADMMclass} and for any $\theta \in (0,(1+\sqrt{5})/2)$ are
given  in \cite{Cui,Gu2015}.
In addition to providing alternative proofs for these latter results, paper \cite{MJR2} obtains an ergodic iteration complexity
bound for the proximal ADMM with $\theta=(1+\sqrt{5})/2$.
Finally, a number of papers (see for example \cite{Deng1,GADMM2015,MJR,Hager,Lin,LanADMM} and  references therein) have obtained
similar  complexity results in the context of other ADMM classes.

Iteration-complexity analysis of the ADMM has also been established for possibly nonconvex instances of \eqref{optl} satisfying
the same assumptions made on this paper, i.e.,
$f$ is a proper lower semi-continuous
function  and $g$ is a continuously differentiable function whose gradient is Lipschitz continuous on the whole $\R^p$.
Recently, there have been a lot of interest on the study of ADMM variants for nonconvex problems (see, e.g., \cite{ADMM_KL,Hong20,Hong2016,Jiang2016,SplitMet_NonConv,Multi-blockBregman,BregmanADMM,wotao2015,Stepzisenoncon}).
The results developed in \cite{ADMM_KL,SplitMet_NonConv,Multi-blockBregman,BregmanADMM,wotao2015,Stepzisenoncon} establish
convergence of the generated sequence to a stationary point of \eqref{optl} under the assumption that  the objective function
of \eqref{optl} satisfies the so-called Kurdyka-Lojasiewicz (K-L) property. However, none of these papers considers the issue of iteration complexity
for ADMM although their theoretical analysis are generally half-way or close to accomplishing such goal.
Paper \cite{Hong2016} analyzes the convergence of ADMM for solving nonconvex consensus and sharing problems and establishes
the iteration complexity of ADMM for the consensus problem.
Paper \cite{Jiang2016} studies the iteration-complexity of a multi-block type ADMM method whose two-block
special case is a modification of the proximal ADMM in which the function $g$ of the second subproblem in \eqref{ADMMclass} is
replaced by its linear approximation, $G$ is positive definite and
$H$ is chosen as $LI$ where $L$ is the Lipschitz constant of $\nabla g(\cdot)$.
 Finally, \cite{Hong20} studies the iteration-complexity of a  proximal variant of the augmented Lagrangian method for
solving the $1$-block special form of \eqref{optl}, i.e., with $f=0$ and $A=0$.
\\[2mm]
{\bf Organization of the paper.}    Subsection~\ref{sec:bas}  presents some notation and basic results. Section~\ref{ProximalADMM}
describes the  proximal ADMM and presents corresponding convergence rate bounds whose proofs are given in Subsection~\ref{sec:hpe_Analysis}.

\subsection{Notation and basic results}
\label{sec:bas}
This subsection presents some definitions, notation and basic results used in this paper.

Let $\R^n$ denote the $n$-dimensional Euclidean space with inner product and associated norm denoted by $\inner{\cdot}{\cdot}$ and $\|\cdot\|$, respectively. We use $\R^{l\times n}$ to denote the set
of all $l\times n$ matrices.  
The image space of a matrix $Q\in \R^{l \times n}$ is defined as  ${\rm Im}(Q):=\{Qx: x \in \R^n\}$
and $\cP_Q$ denotes the Euclidean projection onto $\mbox{Im}\, (Q)$. The notation $Q\succ 0$ means that $Q$ is a definite positive matrix. 
The symbol  $\lambda_{\min}(Q)$   denotes the  minimum eigenvalue of a symmetric matrix $Q$.
If $Q$ is a symmetric and positive semidefinite matrix, the seminorm induced by $Q$ on $\R^n$, denoted by $\|\cdot\|_{Q}$, 
is defined as $\|\cdot\|_{Q}= \langle Q (\cdot), \cdot\rangle ^{1/2}$. For a given sequence $\{z_k: k \ge 0\}$,  let  $\{\Delta z_k\}$ be the sequence
defined by
 $$\Delta z_k:=z_k-z_{k-1}, \quad k\geq 1.$$ 
The domain of a function
 $h :\mathbb{R}^n\to (-\infty,\infty]$ is the set   $\dom h := \{x\in \R^n : h(x) < +\infty\}$. 
Moreover, $h$ is said to be proper if  $h(x) < \infty$ for some $x \in \R^n$.

%

We next recall some definitions and results of  subdifferential calculus~\cite{Mordu2006,VariaAna}.

\begin{definition}
Let $h: \R^n \to(-\infty,\infty]$ be a proper lower semi-continuous function.
\begin{enumerate}
\item[(i)] The Fr\'echet subdifferential of $h$ at $x\in \dom h$, written by $\hat{\partial} h(x)$, is the set of all elements $u \in \R^n$ which satisfy
$$\liminf_{y\neq x\; y\to x} \frac{h(y)-h(x)-\inner{u}{y-x}}{\|y-x\|}\geq0.$$
When $x\notin \dom h$, we set $\hat{\partial} h(x)=\emptyset$.
\item[(ii)] The limiting subdifferential, or simply subdifferential, of $h$ at $x \in dom\, h$, written by ${\partial} h(x)$, is defined as 
\[
\partial h (x)=\{u\in \R^n:\exists \,  x_n\to x, h(x_n)\to h(x), u_k \in \hat{\partial} h(x_n), \;\mbox{with}\; u_k\to u \}.
\]
\item[(iii)]  A critical (or stationary) point of $h$ is a point $x$ in the domain of $h$ satisfying $0\in \partial h(x)$.
\end{enumerate}
\end{definition}  
The following result gives some properties of the subdifferential.
 
\begin{proposition} Let $h: \R^n \to (-\infty,\infty]$ be a proper lower semi-continuous function.
\begin{enumerate}
\item[(a)]  if $\{(u_k,x_k)\}$ is a  sequence such that $x_k \to x$, $u_k \to u$, $h(x_k)\to h(x)$ and $u_k \in  \partial h(x_k)$,
 then $u \in \partial h(x)$;
\item[(b)]  if $x \in R^n$ is a local minimizer of $h$, then $0\in  \partial h(x)$;
\item[(c)] if $p: \R^n \to \R$ is a continuously differentiable function, then $\partial(h + p)(x) = \partial h(x) + \nabla p(x)$.
\end{enumerate}
\end{proposition}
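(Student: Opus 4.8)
The plan is to verify the three items directly from the definitions of the Fréchet subdifferential $\hat{\partial} h$ and the limiting subdifferential $\partial h$, settling (b) and (c) at the Fréchet level first and then lifting, while proving (a) by a diagonal argument. Throughout I rely on the elementary inclusion $\hat{\partial} h(x) \subseteq \partial h(x)$, which follows from the definition of $\partial h(x)$ by taking the constant sequence $x_k \equiv x$.

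For item (b), I would work entirely with $\hat{\partial} h(x)$. If $x$ is a local minimizer, then $h(y) \ge h(x)$ for all $y$ near $x$, so the difference quotient defining $\hat{\partial} h(x)$ at the vector $u = 0$, namely $[h(y) - h(x) - \inner{0}{y-x}]/\norm{y-x} = [h(y)-h(x)]/\norm{y-x}$, is nonnegative for $y$ close to $x$; hence its $\liminf$ is nonnegative and $0 \in \hat{\partial} h(x)$. Combined with the inclusion above, this gives $0 \in \partial h(x)$.

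For item (c), the crux is an exact sum rule at the Fréchet level, $\hat{\partial}(h+p)(x) = \hat{\partial} h(x) + \nabla p(x)$. This follows from the first-order expansion $p(y) = p(x) + \inner{\nabla p(x)}{y-x} + o(\norm{y-x})$, valid since $p$ is $C^1$: substituting it into the difference quotient for $h+p$ at $u$ turns that quotient into the difference quotient for $h$ at $u - \nabla p(x)$ plus a term $o(\norm{y-x})/\norm{y-x} \to 0$, so the two $\liminf$ inequalities are equivalent and $u \in \hat{\partial}(h+p)(x)$ if and only if $u - \nabla p(x) \in \hat{\partial} h(x)$. To pass to the limiting subdifferential, I take $u \in \partial(h+p)(x)$ with defining sequences $x_k \to x$, $(h+p)(x_k) \to (h+p)(x)$ and $w_k \in \hat{\partial}(h+p)(x_k)$ with $w_k \to u$; continuity of $p$ gives $h(x_k) \to h(x)$, the Fréchet sum rule gives $w_k - \nabla p(x_k) \in \hat{\partial} h(x_k)$, and continuity of $\nabla p$ lets me pass to the limit to obtain $u - \nabla p(x) \in \partial h(x)$. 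This proves $\partial(h+p)(x) \subseteq \partial h(x) + \nabla p(x)$; the reverse inclusion follows by applying the same statement to the pair $(h+p,-p)$, since $h = (h+p) + (-p)$.

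For item (a), the main obstacle, and the only genuinely technical point, is a diagonal extraction. Each $u_k \in \partial h(x_k)$ comes, by definition, with approximating sequences $x_{k,j} \to x_k$, $h(x_{k,j}) \to h(x_k)$ and $w_{k,j} \in \hat{\partial} h(x_{k,j})$ with $w_{k,j} \to u_k$ as $j \to \infty$. For each $k$ I would select an index $j(k)$ so that $\norm{x_{k,j(k)} - x_k}$, $|h(x_{k,j(k)}) - h(x_k)|$ and $\norm{w_{k,j(k)} - u_k}$ are each at most $1/k$; setting $\tilde{x}_k := x_{k,j(k)}$ and $\tilde{w}_k := w_{k,j(k)}$ and combining with the hypotheses $x_k \to x$, $h(x_k) \to h(x)$ and $u_k \to u$ then yields $\tilde{x}_k \to x$, $h(\tilde{x}_k) \to h(x)$ and $\tilde{w}_k \to u$ with $\tilde{w}_k \in \hat{\partial} h(\tilde{x}_k)$. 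These are precisely the sequences certifying $u \in \partial h(x)$. Securing all three convergences simultaneously is where care is needed, but it is routine once the indices $j(k)$ are chosen.
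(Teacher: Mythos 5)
Your proposal is correct in all three items. Note, however, that the paper itself offers no proof of this proposition: it is stated as background material in Subsection~1.1 and simply recalled from the standard references on variational analysis (Mordukhovich; Rockafellar--Wets), so there is no in-paper argument to compare against. What you have written is precisely the standard textbook treatment that those references contain: (b) is Fermat's rule obtained at the Fr\'echet level from the sign of the difference quotient at $u=0$ together with the inclusion $\hat{\partial}h(x)\subseteq\partial h(x)$; (c) rests on the exact Fr\'echet sum rule $\hat{\partial}(h+p)(x)=\hat{\partial}h(x)+\nabla p(x)$ (valid by the first-order expansion of $p$), lifted to the limiting subdifferential via continuity of $p$ and $\nabla p$, with the reverse inclusion obtained by the clean trick of applying the forward inclusion to the pair $(h+p,-p)$; and (a) is the usual diagonal extraction, where your choice of indices $j(k)$ making all three discrepancies at most $1/k$ correctly secures $\tilde{x}_k\to x$, $h(\tilde{x}_k)\to h(x)$ and $\tilde{w}_k\to u$ simultaneously, which is exactly what the definition of $\partial h(x)$ requires. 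In short: the proof is complete and self-contained, and it fills in, rather than deviates from, the arguments the paper delegates to its references.
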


 We end this section by recalling the definition of  critical points of \eqref{optl}.
 \begin{definition}
 A triple $ (x^*,y^*, \lambda^*) \in \R^n\times \R^p\times  \R^l$ is a critical point of problem~\eqref{optl}
if
\[ 0\in  \partial f(x^*)-A^*\lambda^*, \quad 0=  \nabla g(y^*)-B^*\lambda^*,
\quad 0=  Ax^*+By^*-b.
\]
 \end{definition}
Under some mild conditions, it can be shown that if $(x^*,y^*)$ is a local minimum of \eqref{optl}, then there exists
$\lambda^*$ such that $ (x^*,y^*, \lambda^*)$ is a critical point of \eqref{optl}.
\section{Proximal ADMM and its convergence rate}\label{ProximalADMM}

This section describes the assumptions made on problem \eqref{optl} and states the variant of the proximal ADMM considered in this paper.
It also states the main result of this paper  (Theorem~\ref{maintheo}), and a special case of it (Corollary \ref{cor:maincor1}),  both of them
describing convergence rate bounds for the aforementioned proximal ADMM variant.
The proof of Theorem~\ref{maintheo} is however postponed to Section \ref{sec:hpe_Analysis}.

The augmented Lagrangian associated with problem \eqref{optl} is defined as
\begin{equation}\label{lagrangian2}
{\mathcal L}_\beta(x,y,\lambda):=f(x)+g(y)-\langle \lambda,Ax+By-b\rangle+\frac{\beta}{2}\|Ax+By-b\|^2.
\end{equation}

This paper considers problem \eqref{optl} under the following set of assumptions:

\begin{itemize}
\item[{\bf(A0)}] $f: \R^{n} \to (-\infty,\infty]$ is a proper lower semi-continuous function;
\item[{\bf(A1)}] $B \ne 0$ and $ {\rm Im} (B) \supset \{b\} \cup {\rm Im} (A)$;

\item[{\bf(A2)}]
$g: \R^p \to \R$ is differentiable everywhere on $\R^p$ and there exists $L > 0$ such that
\[
\| \cP_{B^*}(\nabla g(y')) -  \cP_{B^*}(\nabla g(y)) \| \le L \|y'-y\| \quad \forall y,y' \in \R^p;
\]
\item[{\bf(A3)}]
there exists $m \ge 0$ such that the function $g(\cdot)+ m \|\cdot\|^2/2$ is convex, or equivalently,
\[
g(y') - g(y) - \inner{\nabla g(y)}{y'-y} \ge - \frac{m}2 \|y'-y \|^2 \quad \forall y,y' \in \R^p;
\]
\item[{\bf(A4)}] 
there exists $\bar \beta \ge 0$ such that
\[
\bar {\mathcal{L}}:=\inf_{(x,y)} \left \{ f(x)+g(y) +\frac{\bar \beta}{2} \| Ax+ By -b \|^2 \right\}>-\infty. \label{assump:beta_inf}
\]
\end{itemize}

Some comments are in order. First,  due to the generality of ({\bf A0}), problem \eqref{optl} may include
an extra constraint of the form $x \in X$ where $X$ is a closed set since this constraint can be incorporated
into $f$ by adding to it the indicator function of $X$.
Second, ({\bf A1})  implies that for every $x \in \R^n$, there exists $y \in  \R^p$ such that
$(x,y)$ satisfies the (linear) constraint of \eqref{optl}. The extra condition that
$B \ne 0$ is very mild since otherwise \eqref{optl} would be much simpler to solve.
Third,  if $\nabla g(\cdot)$ is $L$-Lipschitz continuous, then ({\bf A2})
and ({\bf A3}) with $m=L$ obviously hold.
However, conditions ({\bf A2}) and ({\bf A3}) combined are generally weaker than the condition that
$\nabla g(\cdot)$ be $L$-Lipschitz continuous.

Next we  state the   proximal ADMM  for solving  problem \eqref{optl}.
\\
\hrule
\noindent
\\
{\bf Proximal ADMM}
\\
\hrule
\begin{itemize}
\item[(0)] Let an initial point $(x_0,y_0,\lambda_0) \in \R^n\times \R^p\times  \R^l$ and  a symmetric positive semi-definite matrix $G\in \R^{n\times n}$ be given. 
Let a  stepsize parameter $\theta\in (0,2)$ be given and define
\begin{equation}\label{def:gamma}
\gamma:=\frac{\theta}{(1-|\theta-1|)^2}.
\end{equation}
Choose scalars $\beta\geq \bar{\beta}$ (see {\bf (A4)}) and $\tau\geq0$ such that   
\begin{align}\label{assump:betatau}
\delta_1 :=\left(\frac{\beta\sigma_B+\tau-m}{4}-\frac{3\gamma(L^2+\tau^2)}{\beta\sigma_B^+}\right)>0,
\end{align}
where $\sigma_B$ (resp., $\sigma_B^+$) denotes the smallest  eigenvalue (resp., positive eigenvalue) of $B^*B$, and set $k=1$;
\item[(1)]    compute an optimal solution $x_k \in \R^n$  of the subproblem
\begin{equation} \label{sub:x}
\min_{x \in \R^n} \left \{ {\mathcal L}_\beta(x,y_{k-1},\lambda_{k-1})+\frac{1}{2}\|x- x_{k-1}\|_{G}^2\right\}
\end{equation}
and then compute an optimal solution $y_k\in \R^p$ of the subproblem
\begin{equation} \label{sub:y}
\min_{y \in \R^p} \left \{ {\mathcal L}_\beta(x_k,y,\lambda_{k-1}) +\frac{\tau}{2}\|y- y_{k-1}\|^2\right\};
\end{equation}
\item[(2)] set 
\begin{equation}\label{eq:lambda}
\lambda_k = \lambda_{k-1}-\theta\beta\left[Ax_k+By_k-b\right]
\end{equation}
and $k \leftarrow k+1$, and go to step~(1).
\end{itemize}
{\bf end}
\\
\hrule
\vgap

We now make a few remarks about the proximal ADMM. 
First, the assumption that $\theta \in (0,2)$ guarantees that $\gamma$ in \eqref{def:gamma} is well-defined
and  positive.
Second, the special case of the proximal ADMM in which $G=0$  requires only an initial pair
$(y_0,\lambda_0)$ since any of its iteration is independent of $x_{k-1}$.
{Third,  inequality \eqref{assump:betatau} implies that $\beta B^*B + \tau I-m I \succ 0$.
Thus, the
objective function of subproblem \eqref{sub:y} is strongly convex and hence  $y_k$ is uniquely determined.}
Fourth, the subproblems  \eqref{sub:x} and \eqref{sub:y} are of the form
\[
\min_{x \in \R^n}\left\{ f(x) + \inner{c}{x}+\frac{1}{2}\|x\|_{G+\beta A^*A}^2 \right\}, \quad
\min_{y \in \R^p}\left\{ g(y)+  \inner{d}{y} +\frac{1}{2}\|y\|_{\tau I + \beta B^*B}^2 \right\}
\]
for some $c \in \R^n$ and $d \in \R^p$. For the purpose of this paper, we assume they are easy to solve exactly, possibly by
choosing $\tau\geq0$, $\beta>0$ and  $G$ appropriately.
Fifth, condition~\eqref{assump:betatau} imposed on the different data constants and parameters of the proximal ADMM method
are needed to establish convergence rate bounds for it (see Theorem~\ref{maintheo}).
Note that, if either $\sigma_\beta>0$ or $\tau>0$, then it is always possible to choose a sufficiently large
penalty parameter $\beta$ satisfying this condition.
Hence, it is possible to obtain convergence rate bounds for the standard ADMM (i.e., the special case of
the above method with $G=0$ and $\tau=0$) for $\beta$ sufficiently large (see Corollary \ref{cor:maincor1}).


Next we define a  parameter required in order to present our convergence rate bounds.
Define
\begin{align}
\eta_0 (y_0,\lambda_0;\theta) := \min_{(\Delta y_0, \Delta \lambda_0) } &\; \frac{c_1}{2}\|B^*\Delta\lambda_0\|^2
+\left(\frac{\beta\sigma_B+\tau-m}{4}\right)\|\Delta y_0\|^2 \nonumber \\
 {\rm s.t.}  \ \ & \tau\Delta y_0+(1-1/\theta)B^*\Delta \lambda_0=  B^*\lambda_0 -\nabla g(y_0) \label{def:eta_00}
\end{align}
where
\begin{equation}\label{def:c1} 
 c_1:=\frac{2|\theta-1|}{\beta\theta(1-|\theta-1|)\sigma_B^+} \geq0.
\end{equation}

Theorem \ref{maintheo} below  expresses the complexity of the proximal ADMM in terms of the quantity $\eta_0$, 
which depends on the initial iterate pair  $(y_0,\lambda_0)$ as well  as the constant $m$ and the parameters $\theta$, $\beta$ and
$\tau$ used by the method. This contrasts with the analysis of the papers \cite{Hong20,Hong2016,Jiang2016} which derive iteration-complexity
for variants of the augmented Lagrangian and
the proximal ADMM expressed in terms of both $(x_0,y_0,\lambda_0)$ and $(x_1,y_1,\lambda_1)$.
We believe that the one derived in this paper is more convenient since quantities expressed only in terms of
$(x_0,y_0,\lambda_0)$ are easier to compute and/or estimate. Definition \eqref{def:eta_00} of $\eta_0$ is somewhat complicated
but, under some conditions, it simplifies or an upper bound on $\eta_0$ can easily be obtained.
The following trivial result elaborates  on this point and
gives sufficient conditions for the quantity  $\eta_0$  to be finite.


\begin{lemma}\label{lem:125} Let $(y_0,\lambda_0) \in \R^p\times  \R^l$ and $\theta\in (0,2)$ be given.
Then, problem \eqref{def:eta_00} is feasible, and hence the quantity $\eta_0 := \eta_0(y_0,\lambda_0;\theta)$
 is finite, under either one of the  following conditions:
\begin{itemize}
\item[(i)]  $\tau=0$ and $B^*\lambda_0=\nabla g(y_0)$, in which case $\eta_0=0$;
\item[(ii)] $\tau=0$, $\theta\neq1$ and  $B^*B$ invertible;
\item[(iii)] $\tau>0$.
\end{itemize}
\end{lemma}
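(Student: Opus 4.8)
The plan is to observe that, because the objective in \eqref{def:eta_00} is a nonnegative quadratic, the only substantive issue is feasibility of the affine constraint: once a feasible pair is produced in each case, finiteness of $\eta_0$ follows automatically. To see nonnegativity of the objective, note that $c_1\ge 0$ by \eqref{def:c1}, and that the coefficient $(\beta\sigma_B+\tau-m)/4$ is strictly positive. Indeed, \eqref{assump:betatau} gives $\delta_1>0$, and since the subtracted term $3\gamma(L^2+\tau^2)/(\beta\sigma_B^+)$ is nonnegative (here $\gamma>0$ by \eqref{def:gamma} and $\beta\sigma_B^+>0$), we get $(\beta\sigma_B+\tau-m)/4\ge\delta_1>0$. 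Hence both terms of the objective are $\ge 0$, so the objective is bounded below by $0$; consequently the infimum in \eqref{def:eta_00} is a finite nonnegative number as soon as the constraint set is nonempty. Throughout I abbreviate the right-hand side of the constraint as $w:=B^*\lambda_0-\nabla g(y_0)$.

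It then remains to exhibit, in each of the three cases, a pair $(\Delta y_0,\Delta\lambda_0)$ satisfying $\tau\Delta y_0+(1-1/\theta)B^*\Delta\lambda_0=w$. In case (i), the hypotheses $\tau=0$ and $B^*\lambda_0=\nabla g(y_0)$ give $w=0$, so $(\Delta y_0,\Delta\lambda_0)=(0,0)$ is feasible; since the objective vanishes there and is nonnegative everywhere, this point is a global minimizer and $\eta_0=0$. In case (iii), where $\tau>0$, I would simply take $\Delta\lambda_0=0$ and $\Delta y_0=w/\tau$, which clearly satisfies the constraint.

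The only case requiring a small linear-algebra argument is (ii), where $\tau=0$, $\theta\ne 1$, and $B^*B$ is invertible. Here the constraint reduces to $(1-1/\theta)B^*\Delta\lambda_0=w$, i.e., $B^*\Delta\lambda_0=\frac{\theta}{\theta-1}\,w$ after dividing by the nonzero scalar $1-1/\theta$. Since $B^*B$ invertible forces $\mathrm{rank}(B^*)=\mathrm{rank}(B^*B)=p$, the map $B^*\colon\R^l\to\R^p$ is surjective, so this equation admits a solution $\Delta\lambda_0$; taking $\Delta y_0=0$ then yields a feasible pair. I expect this surjectivity step to be the only point deserving any care, namely the identification ${\rm Im}(B^*)={\rm Im}(B^*B)=\R^p$, while all remaining verifications are immediate. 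Combining the three cases establishes feasibility (and, in case (i), the sharper value $\eta_0=0$), which, together with the nonnegativity of the objective recorded above, completes the proof.
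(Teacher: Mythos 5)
Your proof is correct, and it is precisely the verification the paper intends: the paper states Lemma~\ref{lem:125} as a ``trivial result'' with no written proof, the implicit argument being exactly yours---the objective of \eqref{def:eta_00} is nonnegative (since $c_1\ge 0$ and $(\beta\sigma_B+\tau-m)/4\ge\delta_1>0$ by \eqref{assump:betatau}), so finiteness reduces to exhibiting a feasible pair, which you do correctly in each case, including the surjectivity of $B^*$ via $\mathrm{Im}(B^*)=\mathrm{Im}(B^*B)=\R^p$ in case (ii) and the sharper conclusion $\eta_0=0$ in case (i).
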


We note that convergence rate bounds for the proximal ADMM have been derived
in the convex setting  whenever $\beta>0$ and $\theta \in (0,(1+\sqrt{5})/2)$ (see for example \cite{MJR2}).
However, derivation of similar bounds for the case in which $\theta \ge (1+ \sqrt{5})/2$ is not known even in the convex setting.
The following result derives  convergence rate bounds for the proximal ADMM for solving
the nonconvex optimization problem \eqref{optl} satisfying  assumptions {\bf(A0)}-{\bf(A4)} for any $\theta \in (0,2)$ and $\beta$ sufficiently large.


\begin{theorem}\label{maintheo} 
Assume that the stepsize $\theta \in (0,2)$ and the initial pair $(y_0, \lambda_0 ) \in \R^n \times \R^p \times \R^l$ is
such that  the quantity $\eta_0 := \eta_0(y_0,\lambda_0;\theta)$ defined in \eqref{def:eta_00} is finite
and define
\begin{equation}\label{def:Ltilde} 
 \Delta^0_\beta:={\mathcal L}_\beta(x_0,y_0,\lambda_0)-{\bar{\mathcal L}}
\end{equation}
where  ${\bar{\mathcal L}}$ is as in {\bf (A4)}.
If, for every $k\geq 1$, we define
\begin{equation}\label{def:lambdahat}
\hat\lambda_k:=\lambda_{k-1}-\beta\left(Ax_k+By_{k-1}-b\right),
\end{equation}
then we have
\begin{equation} \label{eq:optimi.f}
-G\Delta x_k \in \partial f(x_k)-A^* \hat \lambda_k,
\end{equation}
and there exists $j\leq k$ such that

\begin{equation*} 
\|\Delta x_j\|_{G}\leq \sqrt{\frac{6\max\{\eta_0,\Delta^0_\beta\}}{k}}, \qquad
\|\nabla g(y_j)-B^*\hat \lambda_j\|\leq \left(\beta\|B^*B\|+\tau\right)\sqrt{\frac{3\max\{\eta_0,\Delta^0_\beta\}}{\delta_1k}},
\end{equation*}
\begin{equation*} 
\|Ax_j+By_j-b\|\leq \frac{1}{\beta\theta}\sqrt{\frac{3\max\{\eta_0,\Delta^0_\beta\}}{\delta_2k}} 
\end{equation*}
where $\delta_1$ is as in \eqref{assump:betatau}, and $\delta_2$ is defined as
\begin{equation}\label{def:deltas}
\delta_2:= \left(\beta\theta+\frac{6\theta\gamma(L^2+\tau^2)}{\sigma_B^+\delta_1}\right)^{-1}.
\end{equation}
\end{theorem}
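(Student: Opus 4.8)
The plan is to run a descent (Lyapunov) analysis on the augmented Lagrangian, corrected by two quadratic ``momentum'' terms that exactly match the definition \eqref{def:eta_00} of $\eta_0$, and then read off the pointwise rates by a pigeonhole argument over the summed decrease. First I would record the first-order optimality conditions of the two subproblems. Differentiating the objective of \eqref{sub:x} and rewriting $A^*\lambda_{k-1}-\beta A^*(Ax_k+By_{k-1}-b)=A^*\hat\lambda_k$ via \eqref{def:lambdahat} gives \eqref{eq:optimi.f} directly. Writing the optimality condition of \eqref{sub:y} (here $g$ is smooth), substituting $\beta(Ax_k+By_k-b)=-\Delta\lambda_k/\theta$ from \eqref{eq:lambda}, and comparing with $\hat\lambda_k$ yields the two identities $\nabla g(y_k)-B^*\hat\lambda_k=-(\beta B^*B+\tau I)\Delta y_k$ and
\[
B^*\lambda_k-\nabla g(y_k)=(1-1/\theta)B^*\Delta\lambda_k+\tau\Delta y_k \qquad (\star_k).
\]
The first identity already delivers the gradient bound, since $\|\nabla g(y_j)-B^*\hat\lambda_j\|\le(\beta\|B^*B\|+\tau)\|\Delta y_j\|$; and since (A1) forces $Ax_k+By_k-b\in\mathrm{Im}(B)$, hence $\Delta\lambda_k\in\mathrm{Im}(B)$ and $\|\Delta\lambda_k\|^2\le\|B^*\Delta\lambda_k\|^2/\sigma_B^+$, together with $\|Ax_k+By_k-b\|=\|\Delta\lambda_k\|/(\theta\beta)$ this reduces the two remaining estimates to controlling $\|\Delta x_j\|_G$, $\|\Delta y_j\|$ and $\|\Delta\lambda_j\|$.

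Next I would prove the basic three-term descent of $\mathcal L_\beta$. Splitting $\mathcal L_\beta(x_k,y_k,\lambda_k)-\mathcal L_\beta(x_{k-1},y_{k-1},\lambda_{k-1})$ across the $x$-, $y$- and $\lambda$-updates, optimality of \eqref{sub:x} gives a drop of at least $\frac12\|\Delta x_k\|_G^2$; strong convexity of \eqref{sub:y} with modulus $\beta\sigma_B+\tau-m>0$ (positive by \eqref{assump:betatau} together with (A3)) gives a drop of at least $\frac{\beta\sigma_B+\tau-m}{2}\|\Delta y_k\|^2$; while the dual step only raises $\mathcal L_\beta$ by $\frac{1}{\theta\beta}\|\Delta\lambda_k\|^2\le\frac{1}{\theta\beta\sigma_B^+}\|B^*\Delta\lambda_k\|^2$. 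The crux is to dominate this dual ascent. Subtracting $(\star_{k-1})$ from $(\star_k)$ and projecting with $\cP_{B^*}$ produces the recursion $B^*\Delta\lambda_k=-(\theta-1)B^*\Delta\lambda_{k-1}+\theta\,\cP_{B^*}(\nabla g(y_k)-\nabla g(y_{k-1}))+\theta\tau\,\cP_{B^*}(\Delta y_k-\Delta y_{k-1})$, whose memory coefficient has modulus $|\theta-1|<1$ precisely because $\theta\in(0,2)$. This is the step where the extension beyond $(1+\sqrt5)/2$ is won: using (A2) to bound the gradient increment by $L\|\Delta y_k\|$ and a discrete Young inequality for the geometric kernel $\{|\theta-1|^j\}$, one obtains $\sum_{k\le N}\|B^*\Delta\lambda_k\|^2\le\frac{3\theta^2(L^2+\tau^2)}{(1-|\theta-1|)^2}\sum_{k\le N}\|\Delta y_k\|^2$, that is, $\sum_k\|\Delta\lambda_k\|^2\le\frac{3\theta\gamma(L^2+\tau^2)}{\sigma_B^+}\sum_k\|\Delta y_k\|^2$ with $\gamma$ as in \eqref{def:gamma} (using $\theta\gamma=\theta^2/(1-|\theta-1|)^2$).

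I would then introduce the potential $V_k:=\mathcal L_\beta(x_k,y_k,\lambda_k)-\bar{\mathcal L}+\frac{c_1}{2}\|B^*\Delta\lambda_k\|^2+\frac{\beta\sigma_B+\tau-m}{4}\|\Delta y_k\|^2$, where the choice of $c_1$ in \eqref{def:c1} is exactly what makes the $\|B^*\Delta\lambda_k\|^2$ contributions telescope against the recursion. At $k=0$ one is free to pick the ``virtual'' increments $(\Delta y_0,\Delta\lambda_0)$, subject only to the constraint in \eqref{def:eta_00} (which is $(\star_0)$, needed to start the recursion at $k=1$); minimizing the two quadratic terms over this constraint shows $V_0=\Delta^0_\beta+\eta_0\le 2\max\{\eta_0,\Delta^0_\beta\}$, with feasibility guaranteed by Lemma~\ref{lem:125}. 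Showing $V_k$ is bounded below requires controlling the indefinite term $-\langle\lambda_k,Ax_k+By_k-b\rangle$; here I would use that the component of $\lambda_k$ outside $\mathrm{Im}(B)$ is frozen at that of $\lambda_0$ and pairs to zero with $Ax_k+By_k-b\in\mathrm{Im}(B)$, then substitute $B^*\lambda_k$ from $(\star_k)$ and invoke (A2)--(A4). Combining the descent with the summed dual estimate and the definitions \eqref{assump:betatau} and \eqref{def:deltas} of $\delta_1,\delta_2$ --- the factor $6$ in $\delta_2$ being tuned so that $\delta_2\cdot\frac{3\theta\gamma(L^2+\tau^2)}{\sigma_B^+\delta_1}<\frac12$ --- yields $\sum_{k=1}^{N}\big(\tfrac12\|\Delta x_k\|_G^2+\delta_1\|\Delta y_k\|^2+\delta_2\|\Delta\lambda_k\|^2\big)\le\tfrac32 V_0\le 3\max\{\eta_0,\Delta^0_\beta\}$. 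Finally, choosing $j\le k$ to minimize the summand bounds each of its three pieces by $3\max\{\eta_0,\Delta^0_\beta\}/k$, and the translations from the first paragraph convert these into the three displayed estimates. The main obstacle I anticipate is the dual recursion and its square-summable bound with the sharp $\gamma$-constant, since everything downstream (positivity of $\delta_1$ and the exact value of $\delta_2$) is calibrated to it.
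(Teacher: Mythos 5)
Your proposal reproduces the paper's architecture almost step for step --- the optimality identities, the recursion $B^*\Delta\lambda_k=(1-\theta)B^*\Delta\lambda_{k-1}+\theta u_k$ (Lemma~\ref{pr:aux-new}), the three-term descent (Lemma~\ref{lem:auxdecL}), the same potential $V_k=\Delta^k_\beta+\eta_k$ with the same $c_1$, and the final summation/pigeonhole step --- so most of it is sound (your summed convolution estimate for $\sum_k\|\Delta\lambda_k\|^2$ is a harmless variant of the paper's telescoped bound in Lemma~\ref{lem:sumDeltaxylambda}, provided you keep the $\eta_0$-terms it currently omits). The one place where you genuinely depart from the paper is also where your argument has a real gap: the proof that $V_k$ is bounded below. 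Note that you need exactly $V_k\ge 0$, not merely $V_k\ge -C$, to land on the stated constant $3\max\{\eta_0,\Delta^0_\beta\}$. You propose to control $-\inner{\lambda_k}{Ax_k+By_k-b}$ by splitting $\lambda_k$ along ${\rm Im}(B)$, substituting $B^*\lambda_k$ from your identity $(\star_k)$, and ``invoking (A2)--(A4)''. Carried out, this is the descent-lemma argument: pick $w_k\in{\rm Im}(B^*)$ with $Bw_k=-(Ax_k+By_k-b)$, write $-\inner{\lambda_k}{Ax_k+By_k-b}=\inner{\nabla g(y_k)}{w_k}+\inner{(1-1/\theta)B^*\Delta\lambda_k+\tau\Delta y_k}{w_k}$, and apply the descent lemma plus {\bf (A4)} at the feasible point $(x_k,y_k+w_k)$. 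The obstruction is the leftover cross term $\inner{(1-1/\theta)B^*\Delta\lambda_k+\tau\Delta y_k}{w_k}$: to conclude $V_k\ge 0$ you must absorb it into $\eta_k$ and into $\frac{\beta}{2}\|Bw_k\|^2-\frac{L}{2}\|w_k\|^2$, which forces requirements of the type $c_1(\beta\sigma_B^+-L)\gtrsim(1-1/\theta)^2$ and $(\beta\sigma_B+\tau-m)(\beta\sigma_B^+-L)\gtrsim\tau^2$, i.e.\ essentially $\theta\beta\sigma_B^+\gtrsim L$. These are \emph{not} implied by \eqref{assump:betatau}: for instance with $\sigma_B=0$ and $\tau\gg L$, or with $\theta$ small, \eqref{assump:betatau} can hold while $\beta\sigma_B^+-L$ is even negative. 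So your sketch cannot be completed under the theorem's hypotheses as stated.

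The paper's own argument for this step (Lemma~\ref{lem:infL}) avoids the issue entirely and is the piece you should adopt. By the dual update, $-\inner{\lambda_k}{Ax_k+By_k-b}=\frac{1}{\theta\beta}\inner{\lambda_k}{\Delta\lambda_k}=\frac{1}{2\theta\beta}\left(\|\lambda_k\|^2-\|\lambda_{k-1}\|^2+\|\Delta\lambda_k\|^2\right)$, so {\bf (A4)} --- applied at the \emph{infeasible} iterates, which is precisely why it is formulated with the $\bar\beta$-penalty rather than over the feasible set --- gives $\Delta^k_\beta+\eta_k\ge\frac{1}{2\beta\theta}\left(\|\lambda_k\|^2-\|\lambda_{k-1}\|^2\right)$. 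Hence the partial sums $\sum_{k=1}^j(\Delta^k_\beta+\eta_k)$ are bounded below by $-\frac{1}{2\beta\theta}\|\lambda_0\|^2$, whereas if a single term were negative, the monotonicity from Proposition~\ref{prop:decL} would drive these partial sums to $-\infty$; the contradiction yields $V_k\ge0$ for all $k$, with no relation between $\beta\sigma_B^+$, $L$ and $\theta$ beyond \eqref{assump:betatau}. Substituting this telescoping-plus-monotonicity argument for your lower-boundedness step makes the rest of your plan go through.
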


As a consequence of the previous result, the following corollary establishes convergence rate bounds for the standard ADMM
for solving \eqref{optl} with invertible matrix $B$ for any stepsize $\theta \in (0,2)$ and sufficiently large penalty parameter $\beta$.


\begin{corollary}\label{cor:maincor1}
Consider the standard ADMM, i.e., the special case of the proximal ADMM with $G=0$ and $\tau=0$,
 applied to problem \eqref{optl} with coefficient matrix $B$  invertible.
Assume that the initial pair $(y_0,\lambda_0)$   satisfies $B^*\lambda_0 = \nabla g(y_0)$ and
$\beta \ge \bar \beta$ is chosen in a such a way that
\begin{equation}\label{eq:2353}
\frac{\beta\sigma_B-2m}8 \ge \frac{3\gamma L^2}{\beta\sigma_B}.
\end{equation}
Then, $\Delta^0_\beta \ge 0$  where   $ \Delta^0_\beta $ is as in \eqref{def:Ltilde}, and for every $k\geq 1$,
$$0 \in \partial f(x_k)-A^* \hat \lambda_k$$  and there exists $j\leq k$ such that
\begin{align*}
\|\nabla g(y_j)-B^*\hat \lambda_j\|\leq {\cal O} \left( \sqrt{\beta} \|B^*B\| \sqrt{\frac{ \Delta^0_\beta}{\sigma_B k}} \right),
\qquad
\|Ax_j+By_j-b\|\leq {\cal O} \left(\sqrt{\frac{\Delta^0_\beta}{\beta \theta k}}\right).
\end{align*}
\end{corollary}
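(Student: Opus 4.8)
The overall strategy is to specialize Theorem~\ref{maintheo} to the case $G=0$, $\tau=0$ with $B$ invertible, so that the bulk of the work is verifying the hypotheses of that theorem and then simplifying its conclusions. First I would check that $\eta_0:=\eta_0(y_0,\lambda_0;\theta)=0$: since $\tau=0$ and $B^*\lambda_0=\nabla g(y_0)$, Lemma~\ref{lem:125}(i) applies and gives feasibility of \eqref{def:eta_00} with optimal value $0$. Next, because $B$ is invertible we have $\sigma_B=\sigma_B^+>0$, and I would show that \eqref{eq:2353} forces the standing requirement \eqref{assump:betatau}: substituting $\tau=0$ into the definition of $\delta_1$ and using \eqref{eq:2353} to bound $3\gamma L^2/(\beta\sigma_B)\le(\beta\sigma_B-2m)/8$ gives $\delta_1\ge\beta\sigma_B/8>0$. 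This explicit lower bound on $\delta_1$ will be the quantitative workhorse for the final simplifications.

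The main obstacle is proving $\Delta^0_\beta\ge 0$. Here I would exploit that the chosen $x_0$ can be matched by a feasible $y$: since $B$ is invertible, set $\bar y:=B^{-1}(b-Ax_0)$, so that $Ax_0+B\bar y-b=0$ and, writing $w_0:=Ax_0+By_0-b$, one has $w_0=B(y_0-\bar y)$. Testing the infimum defining $\bar{\mathcal L}$ in {\bf(A4)} at the feasible pair $(x_0,\bar y)$ gives $\bar{\mathcal L}\le f(x_0)+g(\bar y)$, whence $\Delta^0_\beta\ge g(y_0)-g(\bar y)-\langle\lambda_0,w_0\rangle+\tfrac\beta2\|w_0\|^2$. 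The key simplification is that $\langle\lambda_0,w_0\rangle=\langle B^*\lambda_0,y_0-\bar y\rangle=\langle\nabla g(y_0),y_0-\bar y\rangle$, by the assumption $B^*\lambda_0=\nabla g(y_0)$. Since $B$ is invertible, $\cP_{B^*}$ is the identity, so {\bf(A2)} says that $\nabla g$ is globally $L$-Lipschitz, and the descent lemma yields $g(y_0)-g(\bar y)-\langle\nabla g(y_0),y_0-\bar y\rangle\ge-\tfrac L2\|y_0-\bar y\|^2$. Combining this with $\|w_0\|^2\ge\sigma_B\|y_0-\bar y\|^2$ gives $\Delta^0_\beta\ge\tfrac12(\beta-L/\sigma_B)\|w_0\|^2$; finally \eqref{eq:2353}, after dropping the nonnegative term $2m\beta\sigma_B$, yields $(\beta\sigma_B)^2\ge 24\gamma L^2$, so $\beta\sigma_B\ge\sqrt{24\gamma}\,L>L$ because $\gamma\ge 1$ for every $\theta\in(0,2)$ (as is readily checked). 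Hence $\beta>L/\sigma_B$ and $\Delta^0_\beta\ge 0$.

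With these two facts in hand I would invoke Theorem~\ref{maintheo}. Taking $G=0$ collapses \eqref{eq:optimi.f} to $0\in\partial f(x_k)-A^*\hat\lambda_k$ for all $k$, while $\eta_0=0$ together with $\Delta^0_\beta\ge0$ turns $\max\{\eta_0,\Delta^0_\beta\}$ into $\Delta^0_\beta$. It then remains to simplify the two nontrivial estimates. Using $\delta_1\ge\beta\sigma_B/8$ in the gradient bound (with $\tau=0$) gives $\|\nabla g(y_j)-B^*\hat\lambda_j\|\le\sqrt{24}\,\|B^*B\|\sqrt{\beta\Delta^0_\beta/(\sigma_Bk)}$, which is the first claimed estimate. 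For the feasibility bound I would first bound $\delta_2$ from below: from $\delta_1\ge\beta\sigma_B/8$ and $\gamma L^2\le\beta^2\sigma_B^2/24$ (another rewriting of \eqref{eq:2353}) one obtains $6\theta\gamma L^2/(\sigma_B\delta_1)\le 2\theta\beta$, hence $1/\delta_2\le 3\theta\beta$ and $\delta_2\ge 1/(3\theta\beta)$; plugging this into the feasibility estimate of Theorem~\ref{maintheo} yields $\|Ax_j+By_j-b\|\le 3\sqrt{\Delta^0_\beta/(\theta\beta k)}$, the second claimed estimate. The only genuinely delicate step is the sign of $\Delta^0_\beta$; the rest is bookkeeping driven by the bound $\delta_1\ge\beta\sigma_B/8$.
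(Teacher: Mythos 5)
Your proof is correct, and it follows the paper's own template in all but one step. The verification that $\eta_0=0$ via Lemma~\ref{lem:125}(i), the lower bound $\delta_1\ge \beta\sigma_B/8$ obtained by feeding \eqref{eq:2353} into \eqref{assump:betatau} with $\tau=0$, the bound $1/\delta_2\le 3\beta\theta$ obtained from $\gamma L^2\le(\beta\sigma_B)^2/24$ and \eqref{def:deltas}, and the final substitution into Theorem~\ref{maintheo} all coincide with the paper's argument (the paper records two-sided bounds $\beta\sigma_B/8\le\delta_1\le\beta\sigma_B/4$ and $\beta\theta\le 1/\delta_2\le 3\beta\theta$, but only the directions you use matter). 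Where you genuinely differ is the claim $\Delta^0_\beta\ge 0$: the paper dispatches it in one line by invoking Lemma~\ref{lem:infL} with $k=0$, a lemma whose proof is a contradiction argument exploiting the monotone decrease of $\{\Delta^k_\beta+\eta_k\}$ along the entire generated sequence; you instead prove it directly from the data at the initial point, testing the infimum in {\bf (A4)} at the feasible pair $(x_0,B^{-1}(b-Ax_0))$, using $B^*\lambda_0=\nabla g(y_0)$, the descent lemma (legitimate here since $\cP_{B^*}=I$ when $B$ is invertible, so {\bf (A2)} gives global $L$-Lipschitzness of $\nabla g$), and the eigenvalue bound $\|B(y_0-\bar y)\|^2\ge\sigma_B\|y_0-\bar y\|^2$, arriving at $\Delta^0_\beta\ge\tfrac12(\beta-L/\sigma_B)\|Ax_0+By_0-b\|^2\ge 0$ because \eqref{eq:2353} together with $\gamma\ge 1$ forces $\beta\sigma_B\ge\sqrt{24}\,L>L$ (and the case $f(x_0)=+\infty$ is trivial since then $\Delta^0_\beta=+\infty$). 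Your route buys a self-contained, purely ``initial-data'' argument with an explicit quantitative lower bound on $\Delta^0_\beta$, at the price of using the corollary's special hypotheses (invertibility of $B$ and the choice of $\lambda_0$); the paper's route is essentially free because Lemma~\ref{lem:infL} is already needed for Theorem~\ref{maintheo}, and it does not require $B$ to be invertible. Both arguments are valid.
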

\begin{proof} 
Note that the assumptions that $B$ is invertible and $B^*\lambda_0 = \nabla g(y_0)$, together with Lemma \ref{lem:125}(i), imply that
$\sigma_B=\sigma_B^+$ and $\eta_0=0$. 
The conclusion that $\Delta^0_\beta \ge 0$ follows from Lemma~\ref{lem:infL} with $k=0$,  and the fact  that $\eta_0=0$.
Moreover, inequality \eqref{eq:2353} yields 
$\gamma L^ 2\leq (\sigma_B\beta)^2/24$.
Hence, since $\tau=0$, 
it follows from the definitions of $\delta_1$ and $\delta_2$ in \eqref{assump:betatau} and \eqref{def:deltas}, respectively, and inequality  \eqref{eq:2353}   that
\[ 
\frac{\beta\sigma_B}8\leq\delta_1\leq \frac{\beta\sigma_B}4, \qquad \beta\theta\leq\frac{1}\delta_2\leq 3\beta\theta.
\]
Hence, $\delta_1={\cal O}(\beta\sigma_B)$ and $1/\delta_2={\cal O}(\beta\theta)$. Therefore, the  desired result trivially follows from
 the facts that $G=0$, $\tau=0$ and $\eta_0=0$, and Theorem~\ref{maintheo}. 
\end{proof}



\section{Proof of Theorem \ref{maintheo}}\label{sec:hpe_Analysis}

This section gives the proof of  Theorem~\ref{maintheo}.

We first establish a few technical lemmas. The first one describes a set of inclusions/equations satisfied by the sequence  $\{(x_k,y_k,\lambda_k)\}$ generated by the proximal ADMM.
 
 \begin{lemma} \label{pr:aux}
Consider the sequence  $\{(x_k,y_k,\lambda_k)\}$ generated by the proximal ADMM and let $\{\hat{\lambda}_k\}$ as defined in \eqref{def:lambdahat}. 
Then, for every $k\geq 1$, the following inclusions hold:
\begin{align}
0&\in \left[ \partial f(x_k)-A^*\hat\lambda_k\right] + G(x_k-x_{k-1}), \label{aux.0}\\[3mm]
0&= \left[\nabla g(y_k)-B^*\hat  \lambda_k\right]+\beta B^*B (y_k-y_{k-1}) + \tau(y_k-y_{k-1}),\label{aux.212}\\[3mm]
0&=  \left[Ax_k+By_k-b \right]+\frac{1}{\theta\beta}(\lambda_k-\lambda_{k-1}).\label{aux.32}
\end{align}
\end{lemma}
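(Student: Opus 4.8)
The plan is to derive each of the three relations directly from the first-order optimality conditions of the two subproblems and the multiplier update, since Lemma~\ref{pr:aux} is essentially a restatement of those optimality conditions after eliminating $\lambda_{k-1}$ in favor of the auxiliary multiplier $\hat\lambda_k$. First I would write out the optimality condition for the $y$-subproblem \eqref{sub:y}: since its objective is strongly convex (as noted in the remarks following the method), $y_k$ is its unique minimizer and satisfies $0 = \nabla_y \left[ {\mathcal L}_\beta(x_k,y_k,\lambda_{k-1}) \right] + \tau(y_k - y_{k-1})$. Differentiating the augmented Lagrangian \eqref{lagrangian2} in $y$ gives
\[
0 = \nabla g(y_k) - B^*\lambda_{k-1} + \beta B^*(Ax_k + By_k - b) + \tau(y_k - y_{k-1}).
\]
The key algebraic move is to recognize from the definition \eqref{def:lambdahat} that $\hat\lambda_k = \lambda_{k-1} - \beta(Ax_k + By_{k-1} - b)$, so that $-B^*\lambda_{k-1} + \beta B^*(Ax_k + By_k - b) = -B^*\hat\lambda_k + \beta B^*B(y_k - y_{k-1})$; substituting this yields \eqref{aux.212} exactly.

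For \eqref{aux.0}, I would use the optimality condition of the $x$-subproblem \eqref{sub:x}. Because $f$ is merely proper lower semicontinuous and possibly nonconvex, I cannot use a convex subdifferential; instead, since $x_k$ is a global minimizer of \eqref{sub:x}, Proposition~(b) gives $0 \in \partial\Phi(x_k)$ where $\Phi(x) := {\mathcal L}_\beta(x,y_{k-1},\lambda_{k-1}) + \frac12\|x - x_{k-1}\|_G^2$. Writing $\Phi = f + p$ with $p$ the smooth remainder of the objective and applying the sum rule Proposition~(c), I get $0 \in \partial f(x_k) + \nabla p(x_k)$, where $\nabla p(x_k) = -A^*\lambda_{k-1} + \beta A^*(Ax_k + By_{k-1} - b) + G(x_k - x_{k-1})$. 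The parenthetical grouping $-A^*\lambda_{k-1} + \beta A^*(Ax_k + By_{k-1} - b)$ is precisely $-A^*\hat\lambda_k$ by \eqref{def:lambdahat}, which delivers \eqref{aux.0}. Finally, \eqref{aux.32} is an immediate rearrangement of the multiplier update \eqref{eq:lambda}: dividing $\lambda_k - \lambda_{k-1} = -\theta\beta(Ax_k + By_k - b)$ by $\theta\beta$ and moving terms gives the claimed identity.

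The only subtle point—and what I expect to be the main obstacle—is the careful justification in the nonconvex $x$-step: I must confirm that the subdifferential sum rule Proposition~(c) applies, which it does because the penalty-plus-proximal part $p$ is continuously differentiable ($\beta A^*A$, $G$, and the linear term all being smooth), and that global minimality legitimately yields the Fréchet/limiting stationarity inclusion via Proposition~(b). Everything else is bookkeeping: expanding the gradients of \eqref{lagrangian2} and matching the $\hat\lambda_k$ regroupings against \eqref{def:lambdahat}. No additional hypotheses beyond \textbf{(A0)}--\textbf{(A2)} and the well-definedness of the iterates are needed, so the proof reduces to writing the three optimality conditions and performing the substitution of $\hat\lambda_k$.
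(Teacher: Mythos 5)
Your proposal is correct and follows essentially the same route as the paper: write the first-order optimality conditions for the subproblems \eqref{sub:x} and \eqref{sub:y}, substitute the definition \eqref{def:lambdahat} of $\hat\lambda_k$ to regroup the terms, and read off \eqref{aux.32} directly from the update \eqref{eq:lambda}. The only difference is that you spell out the justification for the nonconvex $x$-step (local minimality plus the sum rule for a smooth perturbation), which the paper leaves implicit in the phrase ``the optimality conditions for \eqref{sub:x} and \eqref{sub:y} imply.''
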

\begin{proof}
The optimality conditions for \eqref{sub:x} and \eqref{sub:y} imply that
\begin{align*}
0 &\in \partial f(x_k)- A^*({\lambda}_{k-1}-\beta(Ax_{k}+By_{k-1}-b))+G(x_k-x_{k-1}), \\
0 &= \nabla g(y_k)-B^*(\lambda_{k-1}-\beta(Ax_k+By_k-b))+\tau (y_k-y_{k-1}),
\end{align*}
respectively. These relations combined with \eqref{def:lambdahat} immediately yield  \eqref{aux.0} and \eqref{aux.212}.
Relation \eqref{aux.32} follows immediately from \eqref{eq:lambda}.
\end{proof}

The following lemma provides a recursive relation for the sequence $\{\Delta \lambda_k\}$.

\begin{lemma} \label{pr:aux-new}
Let $\Delta y_0\in \R^p$ and $\Delta \lambda_0\in \R^l$ be such that
 \begin{equation}\label{def:deltay0}
\tau\Delta y_0+(1-1/\theta)B^*\Delta \lambda_0=   B^*\lambda_0 -\nabla g(y_0).
 \end{equation}
Then, for every $k \ge 1$, we have
\begin{equation}\label{eq:B*lambda}
 B^*\Delta\lambda_k=(1-\theta)B^*\Delta\lambda_{k-1}+\theta u_k 
 \end{equation}
where 
\begin{equation}\label{def:uk}
u_k=\nabla g(y_k)-\nabla g(y_{k-1})+\tau(\Delta y_k-\Delta y_{k-1}).
\end{equation} 
\end{lemma}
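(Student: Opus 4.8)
The plan is to extract from Lemma~\ref{pr:aux} a single clean pointwise identity for $B^*\Delta\lambda_k$ and then convert it into the desired two-term recursion by using the starting condition \eqref{def:deltay0}. The central intermediate claim I would establish first is that, for every $k\ge 1$,
\begin{equation}
B^*\Delta\lambda_k=\theta\left[\nabla g(y_k)+\tau\Delta y_k-B^*\lambda_{k-1}\right]. \tag{$\star$}
\end{equation}
To prove $(\star)$ I would eliminate the feasibility residuals in favour of multiplier increments. From \eqref{aux.32} one reads off $Ax_k+By_k-b=-\tfrac{1}{\theta\beta}\Delta\lambda_k$, and since $Ax_k+By_{k-1}-b=(Ax_k+By_k-b)-B\Delta y_k$, substituting into \eqref{def:lambdahat} gives the key expression $\hat\lambda_k=\lambda_{k-1}+\tfrac1\theta\Delta\lambda_k+\beta B\Delta y_k$. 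Applying $B^*$ and comparing with \eqref{aux.212}, rewritten as $B^*\hat\lambda_k=\nabla g(y_k)+\beta B^*B\Delta y_k+\tau\Delta y_k$, the $\beta B^*B\Delta y_k$ terms cancel on both sides and, after multiplying through by $\theta$, $(\star)$ drops out.

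The second step is the algebraic heart of the matter: an auxiliary identity
\begin{equation}
\theta\left[\nabla g(y_j)+\tau\Delta y_j-B^*\lambda_j\right]=(1-\theta)B^*\Delta\lambda_j \tag{$\star\star$}
\end{equation}
which I claim holds for all $j\ge 0$. For $j\ge 1$ this is immediate from $(\star)$ together with $B^*\lambda_j=B^*\lambda_{j-1}+B^*\Delta\lambda_j$, which converts the $-\theta B^*\lambda_{j-1}$ appearing in $(\star)$ into $-\theta B^*\lambda_j+\theta B^*\Delta\lambda_j$ and hence into $(1-\theta)B^*\Delta\lambda_j$. For $j=0$ the identity $(\star\star)$ is nothing but the hypothesis \eqref{def:deltay0}: multiplying \eqref{def:deltay0} by $\theta$ and rearranging produces exactly $(\star\star)$ with $j=0$. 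This is precisely where the otherwise mysterious constraint imposed on the starting pair $(\Delta y_0,\Delta\lambda_0)$ enters, and it is what makes the recursion valid already at $k=1$ rather than only for $k\ge 2$.

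Finally I would assemble the recursion. Taking $(\star)$ at index $k$ and replacing the term $-\theta B^*\lambda_{k-1}$ by means of $(\star\star)$ at $j=k-1$ (legitimate since $k-1\ge 0$), the contributions $\theta[\nabla g(y_k)+\tau\Delta y_k]$ and $-\theta[\nabla g(y_{k-1})+\tau\Delta y_{k-1}]$ combine into $\theta u_k$ with $u_k$ as in \eqref{def:uk}, while the leftover $(1-\theta)B^*\Delta\lambda_{k-1}$ survives, yielding \eqref{eq:B*lambda}. I expect the only genuine obstacle to be the bookkeeping in the first step: one must keep careful track of the factor $\theta$ distinguishing $\lambda_k$ from $\hat\lambda_k$ and verify that the quadratic-penalty contributions $\beta B^*B\Delta y_k$ truly cancel. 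Once $(\star)$ and $(\star\star)$ are in hand, the passage to the recursion is a one-line substitution requiring no further estimates.
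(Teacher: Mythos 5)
Your proof is correct and follows essentially the same route as the paper: your identity $(\star)$ is exactly the paper's key relation $B^*\lambda_k=(1-\theta)B^*\lambda_{k-1}+\theta\left[\nabla g(y_k)+\tau\Delta y_k\right]$, derived there from \eqref{def:lambdahat}, \eqref{aux.32} and \eqref{aux.212} by the same cancellation of the $\beta B^*B\Delta y_k$ terms, just rewritten in terms of $B^*\Delta\lambda_k$. The only difference is organizational: the paper gets the recursion for $k\ge 2$ by differencing that identity at consecutive indices and verifies $k=1$ separately via \eqref{def:deltay0}, whereas your $(\star\star)$ merges the base case $j=0$ (the hypothesis) and the cases $j\ge 1$ into a single statement so that all $k\ge1$ are handled uniformly.
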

\begin{proof}
Using \eqref{def:lambdahat} and \eqref{aux.32} we easily see that
$$
\theta \hat\lambda_k:=\lambda_{k}+(\theta-1)\lambda_{k-1} +\beta\theta B(y_k-y_{k-1}),\quad \forall k \ge 1.
$$
This expression together  with  \eqref{aux.212} then imply that
\begin{equation}\label{eq:auxlem1}
 B^* \lambda_k=(1-\theta)B^*\lambda_{k-1}+ \theta [ \nabla g(y_k) + \tau \Delta y_k] \quad \forall k \ge 1.
\end{equation}
Hence, in view of \eqref{def:uk},  relation \eqref{eq:B*lambda} holds for every $k \ge 2$.
Also, \eqref{def:uk} and \eqref{eq:auxlem1} both with $k=1$ imply that
\begin{align*}
B^* \Delta \lambda_1 &= B^* (\lambda_1-\lambda_0) = - \theta B^*\lambda_0 +  \theta \left [\nabla g(y_1) + \tau \Delta y_1\right ]
= - \theta B^*\lambda_0 +  \theta \left[ u_1 + \nabla g(y_0) +  \tau \Delta y_0 \right]
\end{align*}
which, together with the definition of $\Delta y_0$ in \eqref{def:deltay0}, shows that \eqref{eq:B*lambda} also holds for $k=1$.
\end{proof}

The next lemma describes how the sequence $\{(x_k,y_k,\lambda_k)\}$ affects  the value of
the augmented Lagrangian function defined in \eqref{lagrangian2}.

\begin{lemma} \label{lem:auxdecL} For every $k\geq 1$, we have
\begin{itemize} 
\item [(a)] ${\mathcal L}_\beta(x_k,y_{k-1},\lambda_{k-1})- {\mathcal L}_\beta(x_{k-1},y_{k-1},\lambda_{k-1})\leq -\|\Delta x_k\|_{G}^2/2$;
\item [(b)] ${\mathcal L}_\beta(x_k,y_k,\lambda_{k-1})- {\mathcal L}_\beta(x_k,y_{k-1},\lambda_{k-1})\leq (m-\beta\sigma_B-\tau)\|\Delta y_k\|^2/2$;
\item [(c)] ${\mathcal L}_\beta(x_k,y_k,\lambda_k)-{\mathcal L}_\beta(x_k,y_k,\lambda_{k-1})=[1/(\theta\beta)]\|\Delta \lambda_k\|^2.$
\end{itemize}
\end{lemma}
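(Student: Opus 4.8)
The plan is to treat the three block updates separately, exploiting in each case the variational characterization of the corresponding subproblem together with the special structure of ${\mathcal L}_\beta$ defined in \eqref{lagrangian2}.

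For part (a) I would simply use that $x_k$ is a minimizer of the $x$-subproblem \eqref{sub:x}. Comparing the objective of \eqref{sub:x} at the optimal point $x_k$ with its value at the feasible point $x_{k-1}$, and noting that the proximal term $\frac12\|x-x_{k-1}\|_G^2$ vanishes at $x=x_{k-1}$, gives ${\mathcal L}_\beta(x_k,y_{k-1},\lambda_{k-1})+\frac12\|\Delta x_k\|_G^2\le {\mathcal L}_\beta(x_{k-1},y_{k-1},\lambda_{k-1})$, which is exactly (a) after rearranging. This step is immediate.

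Part (b) is the main obstacle. I would first argue that the objective $\phi(y):={\mathcal L}_\beta(x_k,y,\lambda_{k-1})+\frac\tau2\|y-y_{k-1}\|^2$ of the $y$-subproblem \eqref{sub:y} is strongly convex with modulus $\beta\sigma_B+\tau-m$. Indeed, writing ${\mathcal L}_\beta(x_k,\cdot,\lambda_{k-1})$ as the sum of $g$, an affine term, and the convex quadratic $\frac\beta2\|Ax_k+B\cdot-b\|^2$, assumption {\bf (A3)} supplies curvature at least $-m$ for $g$, the quadratic contributes at least $\beta\sigma_B$ (since $\|Bv\|^2\ge\sigma_B\|v\|^2$), and the proximal term contributes $\tau$; the sum is positive by \eqref{assump:betatau}. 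Since $y_k$ minimizes $\phi$, the standard strong-convexity estimate yields $\phi(y_k)-\phi(y_{k-1})\le-\frac{\beta\sigma_B+\tau-m}2\|\Delta y_k\|^2$. I would then convert this into a statement about ${\mathcal L}_\beta$ by observing that $\phi(y_{k-1})={\mathcal L}_\beta(x_k,y_{k-1},\lambda_{k-1})$ (the proximal term vanishes there) while $\phi(y_k)\ge {\mathcal L}_\beta(x_k,y_k,\lambda_{k-1})$ (the proximal term is nonnegative). Discarding that nonnegative term is precisely what converts the sharper estimate into the stated bound $(m-\beta\sigma_B-\tau)/2$; the care needed to route the proximal term through these two inequalities, rather than extract and cancel it, is the one genuinely delicate point.

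For part (c) I would exploit that $\lambda$ enters ${\mathcal L}_\beta(x_k,y_k,\cdot)$ only through the linear term $-\langle\lambda,Ax_k+By_k-b\rangle$; hence the difference equals $-\langle\Delta\lambda_k,Ax_k+By_k-b\rangle$. Substituting the update \eqref{eq:lambda} (equivalently \eqref{aux.32}), which gives $Ax_k+By_k-b=-\frac1{\theta\beta}\Delta\lambda_k$, produces $\frac1{\theta\beta}\|\Delta\lambda_k\|^2$ and establishes the claimed equality. This last step is a direct computation.
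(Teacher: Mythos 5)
Your proof is correct and takes essentially the same route as the paper's: part (a) by comparing the objective of \eqref{sub:x} at $x_k$ and at $x_{k-1}$, part (b) by a strong-convexity growth estimate for the $y$-subproblem objective about its minimizer (the paper derives this via Lemma~\ref{lem:strong_conv} in the seminorm $\|\cdot\|_{\beta B^*B+\tau I}$ and only afterwards uses $\|B\Delta y_k\|^2\ge\sigma_B\|\Delta y_k\|^2$, while you fold $\sigma_B$ into the scalar modulus $\beta\sigma_B+\tau-m$ up front --- the same estimate with different bookkeeping), and part (c) by the same direct computation from \eqref{eq:lambda}. Your handling of the proximal term in (b), evaluated to zero at $y_{k-1}$ and discarded at $y_k$, is exactly how the paper uses its identity \eqref{eq:appp1}, and it yields the same (slightly weakened) constant $(m-\beta\sigma_B-\tau)/2$.
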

\begin{proof}
(a) In view of \eqref{sub:x}, we have  ${\mathcal L}_\beta(x_k,y_{k-1},\lambda_{k-1}) + \|x_k-x_{k-1}\|^2_G/2 \le {\mathcal L}_\beta(x_{k-1},y_{k-1},\lambda_{k-1})$, which, combined with the identity $\Delta x_k =x_k-x_{k-1}$, proves   (a).


(b)
Observe that the objective function of \eqref{sub:y} has the form
\begin{equation} \label{eq:appp1}
{\mathcal L}_\beta(x_k,\cdot,\lambda_{k-1}) + \frac{\tau}{2} \|\cdot-y_{k-1}\|^2 = (g + q)(\cdot)
\end{equation}
where $q$ is a quadratic function whose Hessian
is $Q=\beta B^*B + \tau I$. Since
$Q-m I \succ 0$ in view of \eqref{assump:betatau}, and
condition {\bf (A3)} implies that $g$ is a proper lower semi-continuous such that $g+m \|\cdot\|^2$
is convex, it follows from inequality \eqref{eq:fr45} of Lemma \ref{lem:strong_conv}  with  $y=y_{k-1}$ and $\bar y=y_k$ that
\[
(g+q)(y_{k-1}) \geq
(g+q)( y_k) + \frac{\beta}{2}\|B(y_{k-1}-{y_k})\|^2+ \frac{\tau}{2}\|y_{k-1}-{y_k}\|^2 -\frac{m}{2}\|y_{k-1}-{y_k}\|^2,
\]
which together with \eqref{eq:appp1} yields
\[{\mathcal L}_\beta(x_k,y_k,\lambda_{k-1})- {\mathcal L}_\beta(x_k,y_{k-1},\lambda_{k-1})\leq (m/2)\|\Delta y_k\|^2-(\beta/2)\|B\Delta y_k\|^2-\tau \|\Delta y_k\|^2.\]
Therefore, item (b) follows now from the fact that $\|B\Delta y_k\|^2\geq\sigma_B\|\Delta y_k\|^2$ and simple calculus.


(c) This statement follows from \eqref{eq:lambda}, the identity $\Delta \lambda_k =\lambda_k-\lambda_{k-1}$  and the fact that
\eqref{lagrangian2} implies that
$$
{\mathcal L}_\beta( x_k, y_k,\lambda_k)={\mathcal L}_\beta( x_k, y_k,\lambda_{k-1})- \inner{ \lambda_k-\lambda_{k-1}}{Ax_k+By_k-b}. \qedhere
$$
\end{proof}

Our goal now is to show that
a certain sequence associated with $\{{\mathcal L}_\beta (x_k,y_k,\lambda_k)\}$ is monotonically decreasing, namely,
the sequence  $\{ \Delta^k_\beta+\eta_k\}$ where 
\begin{align}
 \Delta^k_\beta &:= {\mathcal L}_\beta (x_k,y_k,\lambda_k) -\bar{\mathcal L} \qquad \forall k\geq 0, \label{def:Lhat}\\[2mm]
\eta_k &:=\frac{c_1}{2}\|B^*\Delta\lambda_k\|^2+\left(\frac{\beta\sigma_B+\tau-m}{4}\right)\|\Delta y_k\|^2 \qquad \forall k\geq 1,\label{def:eta}
\end{align}
and ${\bar{\mathcal L}}$,   $\eta_0 =\eta_0(y_0,\lambda_0;\theta)$ and $c_1$ are  as defined in {\bf (A4)}, \eqref{def:eta_00} and \eqref{def:c1}, respectively.

Before establishing the monotonicity property of the above sequence, we state three technical results.
The first one describes an upper bound on $ \Delta^k_\beta - \Delta^{k-1}_\beta$ in terms of three quantities
related to $\{\Delta x_k\}$, $\{\Delta \lambda_k\}$ and $\{\Delta y_k\}$, respectively.

\begin{lemma}
For every $k\geq 1$,
\begin{equation}\label{des:Ldec}
 \Delta^k_\beta+\eta_k - (\Delta^{k-1}_\beta+\eta_{k-1}) \le
-\frac{1}{2}\|\Delta x_k\|_{G}^2+ \Theta_k^1 + \Theta_k^2
\end{equation}
where
\begin{equation}\label{def:theta1}
\Theta_k^1 := 
\frac1{\beta\theta}\|\Delta \lambda_k\|^2 + \frac{c_1}2 \left(  \|B^*\Delta\lambda_k\|^2 - \|B^*\Delta\lambda_{k-1}\|^2 \right) 
\end{equation}
and
\begin{equation}\label{def:theta2}
\Theta_k^2 := -\left(\frac{\beta\sigma_B+\tau-m}{4}\right)\left(\|\Delta y_k\|^2+\|\Delta y_{k-1}\|^2\right)
\end{equation}
where $c_1$ is defined in \eqref{def:c1}.
\end{lemma}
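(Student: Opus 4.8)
The plan is to bound the change $\Delta^k_\beta - \Delta^{k-1}_\beta$ in the augmented Lagrangian by chaining together the three estimates of Lemma~\ref{lem:auxdecL}, then add the difference $\eta_k - \eta_{k-1}$ and regroup the resulting terms into $\Theta_k^1$ and $\Theta_k^2$. Since the whole statement is an identity/inequality between explicitly defined quantities, the argument is essentially a careful accounting of terms.

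First I would telescope the augmented Lagrangian through the two intermediate points $(x_k,y_{k-1},\lambda_{k-1})$ and $(x_k,y_k,\lambda_{k-1})$, writing ${\mathcal L}_\beta(x_k,y_k,\lambda_k)-{\mathcal L}_\beta(x_{k-1},y_{k-1},\lambda_{k-1})$ as the sum of the three consecutive differences bounded in parts (a), (b) and (c) of Lemma~\ref{lem:auxdecL}. Adding those three bounds and using definition \eqref{def:Lhat} of $\Delta^k_\beta$ (so that the constant $\bar{\mathcal L}$ cancels) gives
\[
\Delta^k_\beta - \Delta^{k-1}_\beta \le -\frac12\|\Delta x_k\|_{G}^2 + \frac{m-\beta\sigma_B-\tau}{2}\|\Delta y_k\|^2 + \frac{1}{\beta\theta}\|\Delta\lambda_k\|^2.
\]

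Next I would add $\eta_k-\eta_{k-1}$ to both sides, with $\eta_k$ as in \eqref{def:eta}. The $\frac{1}{\beta\theta}\|\Delta\lambda_k\|^2$ term together with the $\frac{c_1}{2}\bigl(\|B^*\Delta\lambda_k\|^2-\|B^*\Delta\lambda_{k-1}\|^2\bigr)$ contribution coming from $\eta_k-\eta_{k-1}$ is exactly $\Theta_k^1$ by \eqref{def:theta1}, so that part requires no further manipulation. The $-\frac12\|\Delta x_k\|_{G}^2$ term likewise passes through unchanged.

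The only point needing care is the collection of the $\Delta y$ terms, and this is where the one structural observation of the proof enters. Writing $a:=(\beta\sigma_B+\tau-m)/4$, the Lagrangian bound contributes $\frac{m-\beta\sigma_B-\tau}{2}\|\Delta y_k\|^2 = -2a\|\Delta y_k\|^2$, while $\eta_k-\eta_{k-1}$ contributes $a\bigl(\|\Delta y_k\|^2-\|\Delta y_{k-1}\|^2\bigr)$; these combine to $-a\bigl(\|\Delta y_k\|^2+\|\Delta y_{k-1}\|^2\bigr)$, which is precisely $\Theta_k^2$ by \eqref{def:theta2}. Assembling the three pieces yields \eqref{des:Ldec}. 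I do not expect a genuine obstacle: the computation is bookkeeping, and the single thing that must be checked is that the coefficient of $\|\Delta y_k\|^2$ delivered by Lemma~\ref{lem:auxdecL}(b) is exactly twice (in magnitude) the coefficient used in the definition \eqref{def:eta} of $\eta_k$, which is what makes the $\Delta y$ contributions telescope correctly into the symmetric form $\Theta_k^2$.
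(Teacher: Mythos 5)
Your proposal is correct and follows exactly the paper's own argument: add the three estimates of Lemma~\ref{lem:auxdecL} (telescoping through $(x_k,y_{k-1},\lambda_{k-1})$ and $(x_k,y_k,\lambda_{k-1})$), then add $\eta_k-\eta_{k-1}$ and regroup, with the key bookkeeping observation that the coefficient $(m-\beta\sigma_B-\tau)/2$ from part (b) is twice the coefficient $(\beta\sigma_B+\tau-m)/4$ in \eqref{def:eta}, so the $\Delta y$ terms collapse into the symmetric expression $\Theta_k^2$. The paper compresses this into one sentence; your write-up is the same proof carried out explicitly.
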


\begin{proof}
The proof of the lemma follows by adding the three inequalities given in statements (a), (b) and (c) of Lemma~\ref{lem:auxdecL} and
using the definitions of $\Delta^k_\beta$ and $\eta_k$ in \eqref{def:Lhat} and \eqref{def:eta}, respectively.
\end{proof}

The next two  results combined provide an upper bound for  $\Theta_k^1$ in terms of  $\{\Delta y_k\}$.

\begin{lemma}\label{lem:des_theta1uk}
Let $u_k$ and $\Theta_k^1$ be as in \eqref{def:uk} and \eqref{def:theta1}, respectively. Then,
$$
\Theta_k^1 \leq \frac{\gamma}{\beta\sigma_B^+}\|u_k\|^2
$$
where $\gamma$ is defined in \eqref{def:gamma}.
\end{lemma}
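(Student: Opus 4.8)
The plan is to first eliminate the term $\|\Delta\lambda_k\|^2$ appearing in \eqref{def:theta1} in favour of $\|B^*\Delta\lambda_k\|^2$, and then use the recursion \eqref{eq:B*lambda} to express everything through $u_k$ and $B^*\Delta\lambda_{k-1}$. For the first reduction, observe that the multiplier update \eqref{eq:lambda} gives $\Delta\lambda_k = -\theta\beta(Ax_k+By_k-b)$, and assumption {\bf (A1)} (namely $\mathrm{Im}(B)\supset\{b\}\cup\mathrm{Im}(A)$) therefore forces $\Delta\lambda_k\in\mathrm{Im}(B)$. Since $\mathrm{Im}(B)=\mathrm{Im}(BB^*)$ and $BB^*$ restricted to this subspace has smallest eigenvalue $\sigma_B^+$, one has $\|B^*v\|^2=\inner{BB^*v}{v}\ge\sigma_B^+\|v\|^2$ for every $v\in\mathrm{Im}(B)$; applied to $v=\Delta\lambda_k$ this yields $\|\Delta\lambda_k\|^2\le(\sigma_B^+)^{-1}\|B^*\Delta\lambda_k\|^2$. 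Substituting this into \eqref{def:theta1} bounds $\Theta_k^1$ by an expression involving only $\|B^*\Delta\lambda_k\|^2$ and $\|B^*\Delta\lambda_{k-1}\|^2$.

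Next I would set $K:=1/(\beta\theta\sigma_B^+)$ and record that, by \eqref{def:c1}, $c_1=2|\theta-1|K/(1-|\theta-1|)$. Collecting the coefficients of $\|B^*\Delta\lambda_k\|^2$, the bound just obtained becomes $\Theta_k^1\le \tfrac{K}{1-|\theta-1|}\big(\|B^*\Delta\lambda_k\|^2-|\theta-1|\,\|B^*\Delta\lambda_{k-1}\|^2\big)$. I would then insert the recursion \eqref{eq:B*lambda}, written as $B^*\Delta\lambda_k=(1-\theta)B^*\Delta\lambda_{k-1}+\theta u_k$, and expand the square. Writing $b:=B^*\Delta\lambda_{k-1}$, $c:=u_k$ and $s:=1-\theta$ (so that $|s|=|\theta-1|$), and using $s^2-|s|=-|s|(1-|s|)$, the quantity $\|B^*\Delta\lambda_k\|^2-|s|\,\|b\|^2$ simplifies to $-|s|(1-|s|)\|b\|^2+2s\theta\inner{b}{c}+\theta^2\|c\|^2$.

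The target inequality, after dividing by $K$ and by the desired coefficient $\gamma/(\beta\sigma_B^+)=\theta^2K/(1-|\theta-1|)^2$ (using \eqref{def:gamma}), reduces to showing that $|s|\,\|b\|^2-\tfrac{2s\theta}{1-|s|}\inner{b}{c}+\tfrac{\theta^2|s|}{(1-|s|)^2}\|c\|^2\ge 0$. Factoring out $|s|\ge 0$ and using $s/|s|=\mathrm{sign}(1-\theta)=:\epsilon\in\{-1,+1\}$, the bracketed quantity is exactly $\| b-\epsilon\tfrac{\theta}{1-|s|}c \|^2$, a perfect square and hence nonnegative; the case $\theta=1$ (i.e.\ $s=0$) is trivial since then $c_1=0$ and both sides vanish. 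The main obstacle is purely the algebraic bookkeeping: one must carry the absolute values $|\theta-1|$ through correctly and recognize that the specific constants $c_1$ and $\gamma$ are engineered precisely so that the cross term $2s\theta\inner{b}{c}$ is absorbed into a completed square whose sign is fixed by $\mathrm{sign}(1-\theta)$. No analytic estimate beyond this completion of squares is needed.
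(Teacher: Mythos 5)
Your proof is correct and takes essentially the same route as the paper's: the same projection bound $\|\Delta\lambda_k\|^2\le(\sigma_B^+)^{-1}\|B^*\Delta\lambda_k\|^2$ derived from {\bf (A1)}, the same substitution of the recursion \eqref{eq:B*lambda}, and the same separate (trivial) treatment of $\theta=1$. The only cosmetic difference is that you absorb the cross term by an explicit completion of squares, whereas the paper invokes $\|s_1+s_2\|^2\le(1+t)\|s_1\|^2+(1+1/t)\|s_2\|^2$ with $t=-1+1/|\theta-1|$; these are the same estimate, with the constants $c_1$ and $\gamma$ playing the identical role in both arguments.
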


\begin{proof}
Assumption {\bf (A1)} clearly  implies that $\Delta \lambda_k=-\beta\theta(Ax_k+By_k-b)\in  {\rm Im}(B)$.
Hence, it follows from  Lemma~\ref{le:147} that
$$
\|\Delta \lambda_k\|=\|\cP_B(\Delta \lambda_k)\|\leq \frac{1}{\sqrt{\sigma_B^+}}\|B^*\Delta \lambda_k\|
$$
where $\cP_B(\cdot)$ is defined in Subsection~\ref{sec:bas}.
Hence, in view of \eqref{eq:B*lambda} and \eqref{def:theta1}, we have
\begin{align*}
\Theta_k^1 &\leq \frac{1}{\beta\theta\sigma_B^+}\|B^*\Delta\lambda_k\|^2+\frac{c_1}{2}(\|B^*\Delta\lambda_k\|^2-\|B^*\Delta\lambda_{k-1}\|^2)\\
&=\left(\frac{1}{\beta\theta\sigma_B^+}+\frac{c_1}{2}\right)\|(1-\theta)B^*\Delta\lambda_{k-1}+\theta u_k\|^2-\frac{c_1}{2}\|B^*\Delta\lambda_{k-1}\|^2.
\end{align*}
Note that if $\theta=1$, then \eqref{def:c1} implies that $c_1=0$ and the above inequality implies the conclusion of the lemma.
We will now establish the conclusion of the lemma for the case in which $\theta \ne 1$.
The previous inequality together with the relation $\|s_1+s_2\|^2\leq (1+t)\|s_1\|^2+(1+1/t)\|s_2\|^2$
which holds for every $s_1,s_2\in \R^l$ and $t>0$
yield
\small{
\begin{align*}
\Theta_k^1&\leq\left(\frac{1}{\beta\theta\sigma_B^+}+\frac{c_1}{2}\right)\left[(1+t)(\theta-1)^2\|B^*\Delta\lambda_{k-1}\|^2+\left(1+\frac{1}{t}\right)\theta^2\|u_k\|^2\right]-\frac{c_1}{2}\|B^*\Delta\lambda_{k-1}\|^2\\
&=\left[\left(\frac{1}{\beta\theta\sigma_B^+}+\frac{c_1}{2}\right)(1+t)(\theta-1)^2-\frac{c_1}{2}\right]\|B^*\Delta\lambda_{k-1}\|^2+\left(\frac{1}{\beta\theta\sigma_B^+}+\frac{c_1}{2}\right)\left(1+\frac{1}{t}\right)\theta^2\|u_k\|^2\\
&=\tiny\left\{\frac{(1+t)(\theta-1)^2}{\beta\theta\sigma_B^+}-\left[1-(1+t)(\theta-1)^2\right]\frac{c_1}{2}\right\}
\|B^*\Delta\lambda_{k-1}\|^2+\left(\frac{1}{\beta\theta\sigma_B^+}+\frac{c_1}{2}\right)\left(1+\frac{1}{t}\right)\theta^2\|u_k\|^2.
\end{align*}
}
Using the above expression with $t= -1+1/|\theta-1|$ and noting that $t>0$ in view of the assumption that $\theta \in (0,2)$, we
conclude that
\begin{align*}
\Theta_k^1 &\leq \left[\frac{1}{\beta\theta\sigma_B^+}|\theta-1|-\left(1-|\theta-1| \right)\frac{c_1}{2}\right]\|B^*\Delta\lambda_{k-1}\|^2
+\left(\frac{1}{\beta\theta\sigma_B^+}+\frac{c_1}{2}\right)\frac{\theta^2}{1-|\theta-1|}\|u_k\|^2 \\
&=\frac{1}{\beta\theta\sigma_B^+}\left(1+\frac{|\theta-1|}{1-|\theta-1|}\right)\frac{\theta^2}{1-|\theta-1|}\|u_k\|^2
\end{align*}
where the last equality is due to \eqref{def:c1}.
Hence, in view of \eqref{def:gamma}, the conclusion of the lemma follows.
\end{proof}
\begin{lemma}\label{lem:desuk} The vector $u_k$ defined  in \eqref{def:uk} satisfies
\[
\|u_k\|^2 \leq 3(L^2+\tau^2)(\|\Delta y_k\|^2+\|\Delta y_{k-1}\|^2).
\]
\end{lemma}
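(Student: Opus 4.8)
The plan is to reduce the whole estimate to the projected Lipschitz assumption \textbf{(A2)}. The essential difficulty is that the three pieces making up $u_k$ — in particular the gradient difference $\nabla g(y_k)-\nabla g(y_{k-1})$ — cannot be bounded directly by $L\|\Delta y_k\|$, because \textbf{(A2)} only controls the projection $\cP_{B^*}(\nabla g(\cdot))$ onto $\mathrm{Im}(B^*)$, not the gradient itself. So the crux is to show first that $u_k$ already lies in $\mathrm{Im}(B^*)$, which then lets me insert the projection $\cP_{B^*}$ at no cost.

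To establish that membership I would invoke \eqref{eq:B*lambda}, which gives $\theta u_k = B^*\Delta\lambda_k-(1-\theta)B^*\Delta\lambda_{k-1}$. Since $\mathrm{Im}(B^*)$ is a subspace containing both $B^*\Delta\lambda_k$ and $B^*\Delta\lambda_{k-1}$, and $\theta>0$, it follows that $u_k\in\mathrm{Im}(B^*)$, hence $u_k=\cP_{B^*}(u_k)$. Using the linearity of the orthogonal projection together with the definition \eqref{def:uk} of $u_k$, I would then write
\[
u_k=\left[\cP_{B^*}(\nabla g(y_k))-\cP_{B^*}(\nabla g(y_{k-1}))\right]+\tau\,\cP_{B^*}(\Delta y_k)-\tau\,\cP_{B^*}(\Delta y_{k-1}).
\]

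Finally I would bound the bracketed term by $L\|\Delta y_k\|$ via \textbf{(A2)}, and the two remaining terms by $\tau\|\Delta y_k\|$ and $\tau\|\Delta y_{k-1}\|$ using that $\cP_{B^*}$ is non-expansive. Applying the elementary inequality $\|s_1+s_2+s_3\|^2\le 3(\|s_1\|^2+\|s_2\|^2+\|s_3\|^2)$ then yields
\[
\|u_k\|^2\le 3\left(L^2\|\Delta y_k\|^2+\tau^2\|\Delta y_k\|^2+\tau^2\|\Delta y_{k-1}\|^2\right)\le 3(L^2+\tau^2)\left(\|\Delta y_k\|^2+\|\Delta y_{k-1}\|^2\right),
\]
which is exactly the claim. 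The only nontrivial step is the membership $u_k\in\mathrm{Im}(B^*)$; everything after it is a routine application of non-expansiveness of the projection and the three-term estimate, mirroring the $\|s_1+s_2\|^2$-type inequality already used in the proof of Lemma~\ref{lem:des_theta1uk}.
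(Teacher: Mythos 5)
Your proposal is correct and follows essentially the same route as the paper's proof: both deduce $u_k\in{\rm Im}\,B^*$ from \eqref{eq:B*lambda}, insert the projection $\cP_{B^*}$ at no cost, and then combine assumption \textbf{(A2)}, non-expansiveness of the projection, and the inequality $(s_1+s_2+s_3)^2\le 3(s_1^2+s_2^2+s_3^2)$. The only cosmetic difference is that you split $\tau\,\cP_{B^*}(\Delta y_k-\Delta y_{k-1})$ into two projected terms by linearity, whereas the paper keeps it together and splits by the triangle inequality; the estimates are identical.
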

\begin{proof}
Noting that \eqref{eq:B*lambda} implies that $u_k\in {\rm Im}\, B^*$ and using assumption {\bf (A2)} and non-expansiveness of
the projection operator, we obtain
\begin{align}\nonumber
\|u_k\|^2 &=\|\cP_{B^*}(u_k)\|^2=\|\cP_{B^*}\left(\nabla g(y_k)-\nabla g(y_{k-1})\right)+\tau \cP_{B^*}\left(\Delta y_k-\Delta y_{k-1}\right)\|^2\\\nonumber
              &\leq \left[ L\|\Delta y_k\|+{\tau}{}\|\Delta y_k -\Delta y_{k-1}\|\right]^2\\
              &\leq 3L^2\|\Delta y_k\|^2+ {3\tau^2}(\|\Delta y_k\|^2+\|\Delta y_{k-1}\|^2)\label{a34:2}
\end{align}
where  the last inequality follows from the triangule inequality and 
the relation $(s_1+s_2+s_3)^2\leq 3s_1^2+3s_2^2+3s_3^2$ for $s_1,s_2, s_3\in \R$. Therefore, the desired inequality follows trivially from \eqref{a34:2}.
\end{proof}

Finally, the next proposition shows that  the sequence $\{ \Delta^k_\beta \}$ decreases.

\begin{proposition}\label{prop:decL} 
The sequence $\{(x_k,y_k,\lambda_k)\}$ generated by the proximal ADMM satisfies
\[
 \Delta^k_\beta+\eta_k - (\Delta^{k-1}_\beta+\eta_{k-1}) \le  -\frac{1}{2}\|\Delta x_k\|_{G}^2-\delta_1  (\|\Delta y_k\|^2+\|\Delta y_{k-1}\|^2)  \quad \forall k \ge 1
\]
where $\delta_1$, $ \Delta^k_\beta$ and $\eta_k$  are as in  \eqref{assump:betatau},  \eqref{def:Lhat} and \eqref{def:eta}, respectively.
\end{proposition}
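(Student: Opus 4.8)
The plan is to chain the three preceding technical results, since each of them was tailored to control one piece of the right-hand side of \eqref{des:Ldec}. First I would invoke \eqref{des:Ldec} itself, which already isolates the term $-\frac{1}{2}\|\Delta x_k\|_G^2$ and reduces the whole task to showing that $\Theta_k^1+\Theta_k^2$ is bounded above by $-\delta_1(\|\Delta y_k\|^2+\|\Delta y_{k-1}\|^2)$. The only genuine work left is to absorb the multiplier term $\Theta_k^1$, which carries the telescoping difference $\|B^*\Delta\lambda_k\|^2-\|B^*\Delta\lambda_{k-1}\|^2$, into an expression involving the $\{\Delta y_k\}$ alone.

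That absorption is exactly what Lemma \ref{lem:des_theta1uk} and Lemma \ref{lem:desuk} accomplish when applied in sequence. Lemma \ref{lem:des_theta1uk} gives $\Theta_k^1 \le (\gamma/(\beta\sigma_B^+))\|u_k\|^2$, and Lemma \ref{lem:desuk} then bounds $\|u_k\|^2 \le 3(L^2+\tau^2)(\|\Delta y_k\|^2+\|\Delta y_{k-1}\|^2)$. Composing these two estimates yields
\[
\Theta_k^1 \le \frac{3\gamma(L^2+\tau^2)}{\beta\sigma_B^+}\left(\|\Delta y_k\|^2+\|\Delta y_{k-1}\|^2\right).
\]

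Next I would add the definition \eqref{def:theta2} of $\Theta_k^2$ and collect the common coefficient of $\|\Delta y_k\|^2+\|\Delta y_{k-1}\|^2$, obtaining
\[
\Theta_k^1+\Theta_k^2 \le \left(\frac{3\gamma(L^2+\tau^2)}{\beta\sigma_B^+}-\frac{\beta\sigma_B+\tau-m}{4}\right)\left(\|\Delta y_k\|^2+\|\Delta y_{k-1}\|^2\right).
\]
I would then recognize the coefficient in parentheses as precisely $-\delta_1$ by the definition \eqref{assump:betatau}, and substitute this back into \eqref{des:Ldec} to conclude.

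As for the main obstacle, there is essentially none in this proposition, because all of the genuine difficulty has already been discharged earlier. In particular, the delicate weighted-norm argument with the optimized parameter $t=-1+1/|\theta-1|$, which is what makes the enlarged over-relaxation range $\theta\in(0,2)$ work, lives entirely inside Lemma \ref{lem:des_theta1uk}. The present statement is a bookkeeping step whose only subtlety is keeping the telescoping term inside $\Theta_k^1$ and the sign conventions aligned so that the final coefficient matches $\delta_1$ exactly; the positivity of $\delta_1$ imposed in \eqref{assump:betatau} is what upgrades the resulting bound into the claimed monotone-decrease estimate.
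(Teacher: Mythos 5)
Your proposal is correct and follows essentially the same route as the paper's own proof: chain Lemma \ref{lem:des_theta1uk} with Lemma \ref{lem:desuk} to bound $\Theta_k^1$ by $\frac{3\gamma(L^2+\tau^2)}{\beta\sigma_B^+}\left(\|\Delta y_k\|^2+\|\Delta y_{k-1}\|^2\right)$, add $\Theta_k^2$ from \eqref{def:theta2}, identify the resulting coefficient as $-\delta_1$ via \eqref{assump:betatau}, and substitute into \eqref{des:Ldec}. Your remark that the real difficulty was already discharged in Lemma \ref{lem:des_theta1uk} is also accurate.
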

\proof 
It follows from Lemmas~\ref{lem:des_theta1uk} and \ref{lem:desuk} that
$$
\Theta_k^1\leq\frac{3\gamma(L^2+\tau^2)}{\beta\sigma_B^+}(\|\Delta y_k\|^2+\|\Delta y_{k-1}\|^2)
$$
and hence, in view of \eqref{assump:betatau} and \eqref{def:theta2}, we have
\begin{align*}
\Theta_k^1+\Theta_k^2&\leq \left(\frac{3\gamma(L^2+\tau^2)}{\beta\sigma_B^+}+\frac{m-\beta\sigma_B-\tau}{4}\right)(\|\Delta y_k\|^2+\|\Delta y_{k-1}\|^2)\\
                                      &=-\delta_1  (\|\Delta y_k\|^2+\|\Delta y_{k-1}\|^2)                                
\end{align*}
%
where the last inequality is due to the definition of $\delta_1$ in \eqref{assump:betatau}. Hence, the result follows due to~\eqref{des:Ldec}.
\endproof

The next three lemmas show how to obtain convergence rate bounds for the quantities
$\|\Delta x_j\|_{G}$, $\|\Delta y_j\|$ and  $\|\Delta \lambda_j\|$ with the aid of Proposition \ref{prop:decL}.
The first one shows that $\{  \Delta^k_\beta +\eta_k\}$ is nonnegative.

\begin{lemma}\label{lem:infL} Let  $ \Delta^k_\beta$ and $\eta_k$  be   as in \eqref{def:Lhat} and \eqref{def:eta}, respectively. Then, 
\begin{equation}\label{eq:209}
\Delta^k_\beta +\eta_k\geq 0 \quad \forall k \geq 0.
\end{equation}
\end{lemma}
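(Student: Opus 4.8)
The plan is to prove the equivalent (and slightly stronger-looking) bound $\mathcal{L}_\beta(x_k,y_k,\lambda_k)\ge \bar{\mathcal L}-\eta_k$; subtracting $\bar{\mathcal L}$ is then exactly \eqref{eq:209}. The only genuinely dangerous piece of $\mathcal{L}_\beta$ is the linear term $-\inner{\lambda_k}{Ax_k+By_k-b}$, in which $\lambda_k$ is \emph{not} controlled by any quantity entering $\eta_k$. The entire argument is organized around making this term cancel rather than estimating it.

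First I would use {\bf (A1)} to produce a feasible companion point. Since $\mathrm{Im}(B)\supset\{b\}\cup\mathrm{Im}(A)$, the residual $Ax_k+By_k-b$ lies in $\mathrm{Im}(B)$, so I may choose $w_k\in\mathrm{Im}(B^*)$ with $Bw_k=Ax_k+By_k-b$ and set $\tilde y_k:=y_k-w_k$, which satisfies $Ax_k+B\tilde y_k-b=0$; because $w_k\in\mathrm{Im}(B^*)=\mathrm{Im}(B^*B)$ and $\sigma_B^+$ is the smallest positive eigenvalue of $B^*B$, one has $\norm{w_k}^2\le\norm{Ax_k+By_k-b}^2/\sigma_B^+$. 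Then {\bf (A4)} with $\beta\ge\bar\beta$ gives $f(x_k)+g(\tilde y_k)\ge\bar{\mathcal L}$, whence
\[
\mathcal{L}_\beta(x_k,y_k,\lambda_k)-\bar{\mathcal L}\ \ge\ \big[g(y_k)-g(\tilde y_k)\big]-\inner{\lambda_k}{Ax_k+By_k-b}+\tfrac{\beta}{2}\norm{Ax_k+By_k-b}^2 .
\]
Next, {\bf (A3)} gives $g(y_k)-g(\tilde y_k)\ge\inner{\nabla g(\tilde y_k)}{w_k}-\tfrac{m}{2}\norm{w_k}^2$, and since $w_k\in\mathrm{Im}(B^*)$ the inner product only sees $\cP_{B^*}\nabla g$, so {\bf (A2)} lets me swap $\tilde y_k$ for $y_k$ at cost $L\norm{w_k}^2$, yielding $g(y_k)-g(\tilde y_k)\ge\inner{\cP_{B^*}\nabla g(y_k)}{w_k}-(L+\tfrac{m}{2})\norm{w_k}^2$.

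The crux is the cancellation. Rearranging the $y$-optimality relation \eqref{aux.212} (equivalently \eqref{eq:auxlem1}) gives $\cP_{B^*}\nabla g(y_k)=B^*\lambda_k-\tau\cP_{B^*}\Delta y_k-\tfrac{\theta-1}{\theta}B^*\Delta\lambda_k$. Since $\inner{B^*\lambda_k}{w_k}=\inner{\lambda_k}{Bw_k}=\inner{\lambda_k}{Ax_k+By_k-b}$, substituting this identity into the previous estimate makes the linear multiplier term cancel \emph{exactly} against the $-\inner{\lambda_k}{Ax_k+By_k-b}$ in the displayed inequality. What survives is a purely quadratic lower bound involving $\Delta\lambda_k$, $\Delta y_k$ and $w_k$, all tied together via $Ax_k+By_k-b=-\Delta\lambda_k/(\theta\beta)$ from \eqref{aux.32}; in particular $-\tfrac{\theta-1}{\theta}\inner{\Delta\lambda_k}{Ax_k+By_k-b}=\tfrac{\theta-1}{\theta^2\beta}\norm{\Delta\lambda_k}^2$, a term whose sign is exactly what the factor $|\theta-1|$ in $c_1$ (see \eqref{def:c1}) is designed to absorb. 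Using $\norm{w_k}^2\le\norm{Ax_k+By_k-b}^2/\sigma_B^+$, Young's inequality on the cross term $-\tau\inner{\cP_{B^*}\Delta y_k}{w_k}$, and the definitions of $c_1$, $\gamma$ and $\delta_1>0$, I would bound the remainder below by $-\big[\tfrac{c_1}{2}\norm{B^*\Delta\lambda_k}^2+\tfrac{\beta\sigma_B+\tau-m}{4}\norm{\Delta y_k}^2\big]=-\eta_k$. For $k\ge1$ the increments $\Delta y_k,\Delta\lambda_k$ are the genuine ones; for the base case $k=0$ the constraint \eqref{def:deltay0} is precisely the $k=0$ instance of the rearranged relation above, so the same computation applies to \emph{any} feasible virtual pair, and since $\eta_0$ is the \emph{minimum} over such pairs I would establish the quadratic remainder inequality uniformly on the feasible set of \eqref{def:deltay0} so that evaluating it at the $\eta_0$-minimizer still gives $\Delta^0_\beta\ge-\eta_0$.

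The main obstacle is the uncontrolled term $\inner{\lambda_k}{Ax_k+By_k-b}$: the whole proof lives or dies on the algebraic identity from the $y$-update that lets it cancel instead of being estimated by a Cauchy--Schwarz bound involving the unbounded $\lambda_k$. A secondary but real difficulty is the base case: because $\eta_0$ is defined through a minimization, the final quadratic inequality must be verified over the entire feasible set of \eqref{def:deltay0} rather than only at the ``consistent'' choice $\Delta\lambda_0=-\theta\beta(Ax_0+By_0-b)$, so that it persists at the minimizer; matching the constants $c_1,\gamma,\delta_1$ in that step is where the calculation is most delicate.
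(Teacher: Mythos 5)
Your proposal is a genuinely different route from the paper's (which argues by contradiction using monotonicity and a telescoping bound; see below), but it contains a genuine gap: the final coefficient-matching step cannot be closed under the paper's standing hypothesis \eqref{assump:betatau}. After your cancellation of $\inner{\lambda_k}{Ax_k+By_k-b}$ and after the $c_1$-term absorbs $\frac{\theta-1}{\theta^2\beta}\norm{\Delta\lambda_k}^2$ (which it does with \emph{zero} slack when $\theta\le 1$, since $c_1/2=\frac{1-\theta}{\beta\theta^2\sigma_B^+}$ there, and $c_1=0$ at $\theta=1$), what remains to be proved is
\[
\frac{\beta}{2}\norm{Bw_k}^2-\Bigl(L+\frac{m}{2}\Bigr)\norm{w_k}^2-\tau\inner{\cP_{B^*}\Delta y_k}{w_k}\ \ge\ -\frac{\beta\sigma_B+\tau-m}{4}\norm{\Delta y_k}^2 .
\]
The only control on $w_k$ is $\norm{Bw_k}^2\ge\sigma_B^+\norm{w_k}^2$, and $\norm{w_k}$ is not controlled by $\norm{\Delta y_k}$, so after optimizing the Young parameter this forces the \emph{additive} condition $\frac{\beta\sigma_B^+}{2}\ge L+\frac{m}{2}+\frac{\tau^2}{\beta\sigma_B+\tau-m}$. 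Condition \eqref{assump:betatau} is of a different, \emph{multiplicative} type (roughly $\beta\sigma_B^+(\beta\sigma_B+\tau-m)\gtrsim\gamma(L^2+\tau^2)$) and does not imply it: since the $m$ of {\bf (A3)} and the $L$ of {\bf (A2)} are independent constants (any $m'\ge m$ also satisfies {\bf (A3)}), take $\theta=1$, $\tau=0$, $B=I$, $m=10L$, $\beta=11.9L$; then $\delta_1=\frac{1.9L}{4}-\frac{3L^2}{11.9L}\approx 0.22L>0$, so the lemma's hypotheses hold, yet $\frac{\beta\sigma_B^+}{2}=5.95L<6L=L+\frac{m}{2}$, and with $c_1=0$ there is no spare term to absorb the deficit, so your chain of inequalities cannot close. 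A second, acknowledged but unresolved, gap is the base case $k=0$: your conversion of the cross term $-\frac{\theta-1}{\theta}\inner{B^*\Delta\lambda_k}{w_k}$ into a multiple of $\norm{\Delta\lambda_k}^2$ uses \eqref{aux.32}, i.e.\ $Ax_k+By_k-b=-\Delta\lambda_k/(\theta\beta)$, which has no analogue for the virtual pair $(\Delta y_0,\Delta\lambda_0)$ of \eqref{def:eta_00}; over the whole feasible set of \eqref{def:eta_00} that term involves two unrelated vectors and consumes an extra Young budget you have not allotted.

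For contrast, the paper avoids all per-iteration coefficient matching. It argues by contradiction: by Proposition~\ref{prop:decL} the sequence $\{\Delta^k_\beta+\eta_k\}$ is nonincreasing, so if some term were negative the partial sums $\sum_{k=1}^{j}(\Delta^k_\beta+\eta_k)$ would diverge to $-\infty$; on the other hand, using $\beta\ge\bar\beta$, {\bf (A4)}, and the dual update \eqref{eq:lambda} to write $-\inner{\lambda_k}{Ax_k+By_k-b}=\frac{1}{2\beta\theta}\bigl(\norm{\lambda_k}^2-\norm{\lambda_{k-1}}^2+\norm{\Delta\lambda_k}^2\bigr)$, each summand is bounded below by the telescoping quantity $\frac{1}{2\beta\theta}\bigl(\norm{\lambda_k}^2-\norm{\lambda_{k-1}}^2\bigr)$, so the partial sums are bounded below by $-\norm{\lambda_0}^2/(2\beta\theta)$ --- a contradiction; the case $k=0$ then follows from the $k=1$ case and one application of Proposition~\ref{prop:decL}. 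Note that the paper's mechanism never needs a feasible companion point, so the dangerous term $\inner{\lambda_k}{Ax_k+By_k-b}$ is neutralized globally (by telescoping) rather than locally (by cancellation). If you wish to salvage your direct argument, you must either strengthen \eqref{assump:betatau} (your constants do close, for instance, when $\tau=0$ and $m\le 4L$) or find a new source of positivity for the $\norm{w_k}^2$ deficit; as written, the proposal proves the lemma only on a strictly smaller parameter region than the paper does.
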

\begin{proof} Let us  first consider that case  $k\geq 1$.  Assume for contradiction that there exists an index $k_0 \ge 0$ such that $     \Delta^{k_0+1}_\beta  +\eta_{k_0+1} <0$.
Since $\{ \Delta^k_\beta+\eta_k\}$ is decreasing (see Proposition~\ref{prop:decL}), we  obtain
\[
\sum_{k=1}^j  (\Delta^k_\beta+\eta_k) \le
\sum_{k=1}^{k_0}  (\Delta^k_\beta+\eta_k) + (j-k_0) (\Delta^{k_0+1}_\beta+\eta_{k_0+1})   \quad \forall j > k_0
\]
and hence 
\[
\lim_{j \to \infty} \sum_{k=1}^j  (\Delta^k_\beta +\eta_k)= - \infty.
\]
On the other hand, since $\beta\geq \bar{\beta}$, it follows from \eqref{lagrangian2}, \eqref{eq:lambda}, \eqref{def:Lhat},  \eqref{def:eta} and assumption {\bf(A4)}  that
\begin{align*}
\Delta^k_\beta+\eta_k& =\mathcal{L}_\beta(x_k,y_k,\lambda_k)-\bar{\mathcal L}+\eta_k\ge \mathcal{L}_\beta(x_k,y_k,\lambda_k)-\bar{\mathcal L} \ge
\mathcal{L}_{\bar \beta}(x_k,y_k,\lambda_k)-\bar{\mathcal L} \\
&= f(x_k)+g(y_k)+ \frac{\bar \beta}{2} \| Ax_k+ By_k -b \|^2-\bar{\mathcal L} + \frac{1}{\beta\theta}\inner{ \lambda_k}{\lambda_k-\lambda_{k-1}}\\[2mm]
&\geq \frac{1}{2\beta\theta}\left(\|\lambda_k\|^2-\|\lambda_{k-1}\|^2+\|\lambda_k-\lambda_{k-1}\|^2\right)\geq \frac{1}{2\beta\theta}\left(\|\lambda_k\|^2-\|\lambda_{k-1}\|^2\right)
\end{align*}
and hence that
\[
\sum_{k=1}^j (\Delta^k_\beta+\eta_k) \ge \frac{1}{2\beta\theta}\left(\|\lambda_j\|^2-\|\lambda_0\|^2\right)\geq -\frac{1}{2\beta\theta}\|\lambda_0\|^2 \quad \forall j \ge 1,
\]
which yields the desired contradiction. Therefore,  \eqref{eq:209} holds for $k\geq 1$. 
 Now, for the case $k=0$, the desired inequality follows from the last conclusion and Proposition \ref{prop:decL}  with $k=1$. 
\end{proof}
\begin{lemma}\label{lem:sumDeltaxylambda} For every $k \ge 1$, we have
\begin{equation}\label{des:deltay&x}
\sum_{j=1}^k\left( \frac{1}{2}\|\Delta x_j\|_{G}^2+\delta_1\|\Delta y_j\|^2 +\delta_2\|\Delta \lambda_j\|^2\right) \leq 3\max\{\Delta^0_\beta,\eta_0\}
\end{equation}
where  $\delta_1$, $ \Delta^0_\beta$ and $\delta_2$  are as defined in \eqref{assump:betatau}, \eqref{def:Ltilde} and  \eqref{def:deltas}, respectively.
\end{lemma}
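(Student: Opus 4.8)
The plan is to sum the one-step decrease estimate from Proposition~\ref{prop:decL} to control the $\Delta x$ and $\Delta y$ terms, and then to handle the $\Delta\lambda$ term separately by summing the quantity $\Theta_k^1$, whose definition in \eqref{def:theta1} is arranged precisely so that its $c_1$-part telescopes.

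First I would telescope the inequality in Proposition~\ref{prop:decL} over $j=1,\dots,k$. The left-hand side collapses to $(\Delta^k_\beta+\eta_k)-(\Delta^0_\beta+\eta_0)$, and since $\Delta^k_\beta+\eta_k\ge 0$ by Lemma~\ref{lem:infL}, this yields
\[
\sum_{j=1}^k\left[\frac12\|\Delta x_j\|_{G}^2+\delta_1\left(\|\Delta y_j\|^2+\|\Delta y_{j-1}\|^2\right)\right]\le \Delta^0_\beta+\eta_0.
\]
Dropping the nonnegative $\|\Delta y_{j-1}\|^2$ contributions gives $\tfrac12\sum\|\Delta x_j\|_G^2+\delta_1\sum\|\Delta y_j\|^2\le 2\max\{\Delta^0_\beta,\eta_0\}$, and the same estimate also records the bound $\sum_{j=1}^k(\|\Delta y_j\|^2+\|\Delta y_{j-1}\|^2)\le(\Delta^0_\beta+\eta_0)/\delta_1$ that I will feed into the $\Delta\lambda$ part.

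Next I would sum $\Theta_j^1$ from \eqref{def:theta1} over $j=1,\dots,k$. Its $c_1$-part telescopes, leaving $\frac{1}{\beta\theta}\sum_{j=1}^k\|\Delta\lambda_j\|^2+\frac{c_1}{2}\|B^*\Delta\lambda_k\|^2-\frac{c_1}{2}\|B^*\Delta\lambda_0\|^2$. Discarding the nonnegative middle term and invoking the combined estimate $\Theta_j^1\le\frac{3\gamma(L^2+\tau^2)}{\beta\sigma_B^+}(\|\Delta y_j\|^2+\|\Delta y_{j-1}\|^2)$ from Lemmas~\ref{lem:des_theta1uk} and~\ref{lem:desuk} (already assembled in the proof of Proposition~\ref{prop:decL}) gives
\[
\frac{1}{\beta\theta}\sum_{j=1}^k\|\Delta\lambda_j\|^2\le \frac{3\gamma(L^2+\tau^2)}{\beta\sigma_B^+}\sum_{j=1}^k\left(\|\Delta y_j\|^2+\|\Delta y_{j-1}\|^2\right)+\frac{c_1}{2}\|B^*\Delta\lambda_0\|^2.
\]
Here I would use that $\frac{c_1}{2}\|B^*\Delta\lambda_0\|^2$ is one of the two nonnegative summands of $\eta_0$ in \eqref{def:eta_00}, hence is $\le\eta_0\le\max\{\Delta^0_\beta,\eta_0\}$.

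Finally I would substitute the $\Delta y$ estimate from the first step, multiply through by $\beta\theta$, and then by $\delta_2$. The definition of $\delta_2$ in \eqref{def:deltas} is calibrated so that $\delta_2\left(\beta\theta+\frac{6\theta\gamma(L^2+\tau^2)}{\sigma_B^+\delta_1}\right)=1$; writing $M:=\max\{\Delta^0_\beta,\eta_0\}$ and bounding $\Delta^0_\beta+\eta_0\le 2M$ together with $\eta_0\le M$, the two contributions collapse to $\delta_2\sum_{j=1}^k\|\Delta\lambda_j\|^2\le M$. Adding this to the $2M$ bound from the first step yields the claimed $3\max\{\Delta^0_\beta,\eta_0\}$. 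The main obstacle is the $\Delta\lambda$ term: the crucial realizations are that summing $\Theta_j^1$ telescopes its $c_1$-part so that $\sum\|\Delta\lambda_j\|^2$ can be isolated with only the boundary term $\frac{c_1}{2}\|B^*\Delta\lambda_0\|^2$ surviving (absorbed into $\eta_0$), and that the precise value of $\delta_2$ is exactly what forces the constant $3$.
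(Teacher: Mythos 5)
Your proposal is correct and follows essentially the same route as the paper's proof: telescoping Proposition~\ref{prop:decL} together with the nonnegativity from Lemma~\ref{lem:infL} to bound the $\Delta x$ and $\Delta y$ sums, then isolating $\sum_j\|\Delta \lambda_j\|^2$ from the telescoping $c_1$-part of $\Theta_j^1$, absorbing the boundary term $\tfrac{c_1}{2}\|B^*\Delta\lambda_0\|^2$ into $\eta_0$ via the optimality of $(\Delta y_0,\Delta\lambda_0)$ in \eqref{def:eta_00}, and closing with the definition of $\delta_2$. The only (immaterial) difference is that the paper rewrites \eqref{def:theta1} to express $\|\Delta\lambda_k\|^2$ iterate-by-iterate before summing, whereas you sum $\Theta_j^1$ first and then rearrange.
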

\begin{proof}
First note that Proposition~\ref{prop:decL} together with Lemma~\ref{lem:infL}  yields, for every $k\geq 1$, 
\begin{equation}\label{eq:auxdeltaxy}
\sum_{j=1}^k \left( \frac{1}{2}\|\Delta x_j\|_{G}^2+\delta_1 (\|\Delta y_j\|^2+\|\Delta y_{j-1}\|^2) \right)
\leq   \Delta^0_\beta+\eta_0 \leq   2\max\{\Delta^0_\beta,\eta_0\}
\end{equation}
which, in particular, implies that
\begin{equation}\label{eq:auxdeltayBy}
 \sum_{j=1}^k (\|\Delta y_j\|^2+\|\Delta y_{j-1}\|^2) 
\leq \frac{2\max\{\Delta^0_\beta,\eta_0\}}{\delta_1}.
\end{equation}
Due to \eqref{eq:auxdeltaxy}, in order to prove  \eqref{des:deltay&x},  it suffices to show that  
\begin{equation}\label{des:deltaLambda}
\sum_{j=1}^k \|\Delta \lambda_j\|^2 \leq\frac{\max\{\Delta^0_\beta,\eta_0\}}{\delta_2}.
\end{equation}
Then, in the remaining part of the proof we will show that  \eqref{des:deltaLambda} holds. By rewriting   \eqref{def:theta1}, we have
$$
\|\Delta \lambda_k\|^2= \beta\theta\left[ \frac{c_1}2 \left(  \|B^*\Delta\lambda_{k-1}\|^2 - \|B^*\Delta\lambda_{k}\|^2\right) + \Theta_k^1 \right]  \qquad \forall k\geq1,
$$
where $\Delta \lambda_0$ is such that the pair $(\Delta y_0,\Delta \lambda_0 )$ is a solution of \eqref{def:eta_00}.
Hence, using  \eqref{def:eta_00} and Lemmas \ref{lem:des_theta1uk} and  \ref{lem:desuk},  we obtain
\begin{align*}
\sum_{j=1}^k \|\Delta \lambda_j\|^2 &\leq \beta\theta \left[\frac{c_1}{2}\|B^*\Delta\lambda_0\|^2+\sum_{j=1}^k\Theta_j^1\right] \leq \beta\theta \eta_0+\frac{\theta\gamma}{\sigma_B^+}\sum_{j=1}^k\|u_j\|^2\\
                                                           &\leq \beta\theta\eta_0+\frac{3\theta\gamma(L^2+\tau^2)}{\sigma_B^+}
                                                           \sum_{j=1}^k(\|\Delta y_j\|^2+\|\Delta y_{j-1}\|^2)\\
                                                          &\leq {\beta\theta\eta_0}+\frac{6\theta\gamma(L^2+\tau^2){ \max\{\Delta^0_\beta,\eta_0\}}}{\sigma_B^+\delta_1}
\end{align*}
where the last inequality is due to \eqref{eq:auxdeltayBy}. 
Hence, \eqref{des:deltaLambda}  follows from  the last inequality and the definition of $\delta_2$ in \eqref{def:deltas}.
\end{proof}
\begin{lemma}\label{lem:bound_deltaxylambda} For every $k\geq 1$, there exists $j\leq k$ such that
\begin{equation*}
\|\Delta x_j\|_{G}\leq \sqrt{\frac{6 \max\{\eta_0,\Delta^0_\beta\}}{k}},\quad \|\Delta y_j\|\leq \sqrt{\frac{3 \max\{\eta_0,\Delta^0_\beta\}}{\delta_1k}},\quad
\|\Delta \lambda_j\|\leq \sqrt{\frac{3\max\{\eta_0,\Delta^0_\beta\}}{\delta_2k}}
\end{equation*}
where  $\delta_1$, $\eta_0$, $ \Delta^0_\beta$ and $\delta_2$  are as defined in \eqref{assump:betatau}, \eqref{def:eta_00}, \eqref{def:Ltilde} and  \eqref{def:deltas}, respectively.
\end{lemma}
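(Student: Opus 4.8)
The plan is to deduce the result directly from the cumulative bound in Lemma~\ref{lem:sumDeltaxylambda} by a simple averaging (pigeonhole) argument. Set $M := 3\max\{\Delta^0_\beta,\eta_0\}$ and, for each $j \ge 1$, introduce the nonnegative quantity
$$
a_j := \frac{1}{2}\|\Delta x_j\|_{G}^2+\delta_1\|\Delta y_j\|^2 +\delta_2\|\Delta \lambda_j\|^2 \ge 0 .
$$
Lemma~\ref{lem:sumDeltaxylambda} asserts precisely that $\sum_{j=1}^k a_j \le M$ for every $k \ge 1$. Since this is a sum of $k$ nonnegative terms, at least one of them cannot exceed the average; that is, there exists an index $j \le k$ with $a_j \le M/k$.

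Fixing such an index $j$, I would then bound each of the three terms individually. Because each of $\tfrac12\|\Delta x_j\|_{G}^2$, $\delta_1\|\Delta y_j\|^2$ and $\delta_2\|\Delta\lambda_j\|^2$ is nonnegative and their sum $a_j$ is at most $M/k$, each one is itself at most $M/k$. Dividing by the appropriate constant and taking square roots yields
$$
\|\Delta x_j\|_{G} \le \sqrt{\frac{2M}{k}}, \qquad \|\Delta y_j\| \le \sqrt{\frac{M}{\delta_1 k}}, \qquad \|\Delta \lambda_j\| \le \sqrt{\frac{M}{\delta_2 k}} ,
$$
and substituting $M = 3\max\{\Delta^0_\beta,\eta_0\}$ (so that $2M = 6\max\{\Delta^0_\beta,\eta_0\}$) gives exactly the three claimed bounds.

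The only point requiring any care is that the three inequalities must hold for one and the same index $j$, rather than for three possibly different indices. This is guaranteed automatically by the structure of Lemma~\ref{lem:sumDeltaxylambda}: it bounds a \emph{single} sum $\sum_j a_j$ that simultaneously controls all three quantities, so the averaging step selects a single $j$ at which $a_j$ is small, forcing all three terms to be small at once. Consequently there is no genuine obstacle here; the lemma is an immediate corollary of the summable-decrease estimate already established in Lemma~\ref{lem:sumDeltaxylambda}, and the entire argument reduces to the pigeonhole observation together with the elementary bound that each nonnegative summand is at most the total.
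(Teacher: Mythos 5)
Your proof is correct and follows exactly the route the paper intends: the paper's own proof simply states that the result ``follows directly from Lemma~\ref{lem:sumDeltaxylambda}'', and your pigeonhole argument on the single sum $\sum_j a_j \le 3\max\{\Delta^0_\beta,\eta_0\}$ is precisely the omitted detail, including the key observation that one index $j$ controls all three quantities simultaneously.
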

\begin{proof}
The proof of this result follows directly from Lemma~\ref{lem:sumDeltaxylambda}.
\end{proof}

We are now ready to prove Theorem~\ref{maintheo}.

\noindent {\bf Proof of Theorem~\ref{maintheo}}:
First note that the inclusion \eqref{eq:optimi.f} follows immediately from \eqref{aux.0}. Also, we obtain from \eqref{aux.212} and \eqref{aux.32} that
$$
\nabla g(y_k)-B^*\hat  \lambda_k=-(\beta B^*B +\tau)\Delta y_k,\quad Ax_k+By_k-b= -\frac{1}{\beta\theta}\Delta \lambda_k, \quad \forall k\geq 1.
$$ 
Hence, to end the proof, just combine the above identities with Lemma~\ref{lem:bound_deltaxylambda}. \hfill{ $\square$}
\appendix
\section{Auxiliary Results}
This section presents some auxiliary results which are used in our presentation.

\begin{lemma}\label{lem:strong_conv}  Assume that, for some $m \ge 0$, $g:\R^{p} \to [-\infty,\infty]$ is 
a proper lower semi-continuous function such that
$g(\cdot)+ m \|\cdot\|^2/2$ is convex and that $q(\cdot)$ is a quadratic function whose Hessian $Q \in \R^{p\times p}$   satisfies
$Q-mI \succ 0$. Then, the problem
\begin{equation}\label{es:98}
\min \{ (g+q)(y) : y \in \R^p \}
\end{equation}
has a unique optimal solution $\bar y$ and
\begin{equation}\label{eq:fr45}
(g+q)(y) \geq (g+q)(\bar y) + \frac{1}{2}\|y-\bar{y}\|_Q^2 -\frac{m}{2}\|y-\bar{y}\|^2 \quad \forall y \in \R^p.
\end{equation}
\end{lemma}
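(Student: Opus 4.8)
The plan is to recognize that $h:=g+q$ is strongly convex and to read off \eqref{eq:fr45} from the first-order optimality condition at its minimizer. First I would split $h$ into a convex piece and a strictly convex quadratic piece by writing
\[
h = \phi + \psi, \qquad \phi := g + \frac{m}{2}\|\cdot\|^2, \qquad \psi := q - \frac{m}{2}\|\cdot\|^2 .
\]
By hypothesis $\phi$ is proper, lower semi-continuous and convex, whereas $\psi$ is a quadratic whose Hessian is exactly $M := Q - mI \succ 0$; in particular $\psi$ is strongly convex with modulus $\mu := \lambda_{\min}(M) > 0$, and $Q \succ mI \succeq 0$ so $\|\cdot\|_Q$ is a genuine norm.

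For existence and uniqueness of $\bar y$, I would argue coercivity of $h$: since $\phi$ is proper lsc convex it admits a continuous affine minorant (e.g.\ via $\phi=\phi^{**}$), while strong convexity of the quadratic gives $\psi(y) \ge \psi(0) + \langle \nabla\psi(0), y\rangle + \frac{\mu}{2}\|y\|^2$; adding these two lower bounds shows $h(y)\to\infty$ as $\|y\|\to\infty$. As $h$ is also proper and lsc, it attains its infimum at some $\bar y$, and strict convexity (inherited from $\psi$) forces $\bar y$ to be unique.

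The core inequality then comes from combining an exact identity with a single subgradient inequality. Because $\psi$ is quadratic with Hessian $M$, its second-order Taylor expansion about $\bar y$ is exact:
\[
\psi(y) = \psi(\bar y) + \langle \nabla\psi(\bar y), y - \bar y\rangle + \frac{1}{2}\|y - \bar y\|_M^2 \qquad \forall y .
\]
Since $\bar y$ minimizes $h$, the optimality condition for a local minimizer together with the subdifferential sum rule for a lsc function plus a smooth one (both recalled in the preliminary Proposition) give $0 \in \partial h(\bar y) = \partial\phi(\bar y) + \nabla\psi(\bar y)$, hence $-\nabla\psi(\bar y) \in \partial\phi(\bar y)$; convexity of $\phi$ then yields $\phi(y) \ge \phi(\bar y) - \langle \nabla\psi(\bar y), y - \bar y\rangle$. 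Adding this to the displayed identity cancels the linear terms and produces $h(y) \ge h(\bar y) + \frac{1}{2}\|y - \bar y\|_M^2$. Finally, expanding $\|y-\bar y\|_M^2 = \langle (Q - mI)(y-\bar y),\, y-\bar y\rangle = \|y-\bar y\|_Q^2 - m\|y-\bar y\|^2$ turns this into exactly \eqref{eq:fr45}.

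I expect the only genuinely delicate point to be the existence half of the first claim, namely producing an affine minorant of the merely lsc convex function $\phi$ so that coercivity of $h$ can be asserted rigorously; once $\bar y$ is in hand, everything is an exact quadratic identity plus one subgradient inequality and is purely mechanical.
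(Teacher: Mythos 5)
Your proof is correct and follows essentially the same route as the paper's: the identical decomposition $g+q=(g+\tfrac m2\|\cdot\|^2)+(q-\tfrac m2\|\cdot\|^2)$, the optimality condition $0\in\partial\phi(\bar y)+\nabla\psi(\bar y)$ via the sum rule, the convex subgradient inequality, and the exact quadratic expansion with Hessian $Q-mI$. The only difference is that you spell out the coercivity argument (affine minorant plus strong convexity) for existence of $\bar y$, which the paper compresses into the single assertion that a proper lsc strongly convex function has a unique minimizer.
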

 \begin{proof} 
 Define $\tilde g :=g+ m \|\cdot\|^2/2$,  $\tilde q = q - m \|\cdot\|^2/2$ and $\tilde Q = Q - m I$.
Clearly, $\tilde g$ is a proper lower semi-continuous convex function and $\tilde q$ is a strongly convex quadratic function
whose Hessian is $\tilde Q \succ 0$. Since $g+ q=\tilde g+\tilde q$, we conclude that  the objective function of \eqref{es:98} is strongly convex, and hence
that the first statement  of the lemma follows. Moreover, we have
\[
0 \in \partial ( g+  q) (\bar y)=\partial (\tilde g+ \tilde q) (\bar y) = \partial \tilde g(\bar y) + \nabla \tilde q(\bar y)
\]
and hence
 \[
\tilde g(y) \ge  \tilde g(\bar y) - \inner{\nabla \tilde q(\bar y)}{y-\bar y}  \quad \forall y \in \R^p.
\]
On the other hand, the fact that $\tilde q$ is a quadratic function implies that
\[
\tilde q(y) = \tilde q(\bar y) + \inner{\nabla \tilde q(\bar y)}{y-\bar y} + \frac12 \| y-\bar y\|^2_{\tilde Q} \quad \forall y \in \R^p.
\]
Adding the above two relations, and using the fact that $g+q=\tilde g + \tilde q$ and the definition of $\tilde Q$, we conclude that
\eqref{eq:fr45} holds.
\end{proof}

\begin{lemma}\label{le:147} 
Let $S \in \R^{n\times p}$ be a non-zero matrix and let $\sigma^+_S$ denote  the smallest positive eigenvalue of $SS^*$. Then, 
for every $u \in \mathbb{R}^p$, there holds
\[
\|\cP_{S^*}(u)\|\leq \frac{1}{\sqrt{\sigma^+_S}}\|Su\|.
\]
\end{lemma}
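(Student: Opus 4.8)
The plan is to reduce the inequality to a statement about the action of $S$ on the subspace ${\rm Im}(S^*)$ and then read everything off from a singular value decomposition of $S$. First I would set $v := \cP_{S^*}(u)$, so that $v \in {\rm Im}(S^*)$ while $u-v$ is orthogonal to ${\rm Im}(S^*)$. Since ${\rm Im}(S^*) = (\ker S)^\perp$, this orthogonality gives $u-v \in \ker S$, and hence $Su = Sv$. This already reduces the target inequality to proving the coercivity estimate $\|Sv\| \ge \sqrt{\sigma_S^+}\,\|v\|$ for every $v \in {\rm Im}(S^*)$.

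Second, I would introduce a singular value decomposition $S = \sum_{i=1}^r s_i\, p_i q_i^*$, where $r = \mathrm{rank}(S) \ge 1$ (here $S \ne 0$ guarantees $r \ge 1$, so that $\sigma_S^+$ is well defined), the scalars $s_1 \ge \cdots \ge s_r > 0$ are the nonzero singular values, and $\{p_i\} \subset \R^n$ and $\{q_i\} \subset \R^p$ are orthonormal families. The two identifications I need are: the nonzero eigenvalues of $SS^* = \sum_{i=1}^r s_i^2\, p_i p_i^*$ are exactly $s_1^2, \ldots, s_r^2$, so that $\sigma_S^+ = s_r^2$; and $\{q_1, \ldots, q_r\}$ is an orthonormal basis of ${\rm Im}(S^*)$, so that $\cP_{S^*}(u) = \sum_{i=1}^r \inner{q_i}{u} q_i$.

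Third, I would carry out the computation directly in these coordinates: $\|\cP_{S^*}(u)\|^2 = \sum_{i=1}^r \inner{q_i}{u}^2$, while $\|Su\|^2 = \sum_{i=1}^r s_i^2 \inner{q_i}{u}^2 \ge s_r^2 \sum_{i=1}^r \inner{q_i}{u}^2 = \sigma_S^+\, \|\cP_{S^*}(u)\|^2$. Taking square roots yields the assertion.

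There is essentially no analytic difficulty here: the lemma is a routine linear-algebra fact, and the only steps requiring care are the bookkeeping identifications, namely that the smallest positive eigenvalue of $SS^*$ equals $s_r^2$ and that the projection onto ${\rm Im}(S^*)$ is expressed through precisely the right singular vectors attached to the nonzero singular values. An equivalent route avoiding the explicit SVD would be to write $\|Sv\|^2 = \inner{S^*Sv}{v}$ and invoke the fact that $S^*S$ restricted to $(\ker S)^\perp$ is positive definite with least eigenvalue $\sigma_S^+$ (the nonzero spectra of $S^*S$ and $SS^*$ coinciding); I would nonetheless keep the SVD version as the primary argument, since it makes all of these identifications transparent in a single step.
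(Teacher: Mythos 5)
Your proof is correct and takes essentially the same approach as the paper's: both rest on a (partial) singular value decomposition of $S$, identify $\|\cP_{S^*}(u)\|$ with the norm of the coefficients of $u$ along the right singular vectors belonging to the nonzero singular values, and bound $\|Su\|$ from below via the smallest positive singular value $\sqrt{\sigma_S^+}$. The only cosmetic differences are that you write the SVD as a sum of rank-one terms rather than in the matrix form $S=R\Lambda Q^*$ used in the paper, and that your preliminary reduction through $\ker S$ is harmless but redundant once you compute $\|Su\|^2$ directly in the singular-vector coordinates.
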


\proof
Let $r$ denote the rank of $S$ and let $S=R \Lambda Q^*$ be a partial  singular-value decomposition of $S$ where
$R \in \R^{n\times r}$ is such that
$R^*R=I$, $Q \in \R^{p \times r}$ is such that $Q^*Q=I$ and $\Lambda \in \R^{r \times r}$ is a positive diagonal matrix.
It is easy to see that
\begin{equation}\label{eq:q3}
\|\cP_{S ^*}(u)\|=\|\cP_{Q}(u)\|=\|Q(Q^*Q)^{-1}Q^*u\|=\|Q^*u\| \quad \forall u \in \mathbb{R}^p.
\end{equation}
Moreover, we have
\[
\|Q^*u\|=\|\Lambda^{-1} \Lambda Q u\|\leq \|\Lambda^{-1}\|\| \Lambda Q^* u\|=\|\Lambda^{-1}\|\| R \Lambda Q^* u\|=
\|\Lambda^{-1}\|\| Su\| \quad \forall u \in \mathbb{R}^p.
\]
The result now follows from the above two relations and the fact that $\|\Lambda^{-1}\| =1/\sqrt{\sigma^+_S}$.
\endproof


\def\cprime{$'$}

\end{document}